\newtheorem {theorem}{Theorem}[section]
\newtheorem {proposition}[theorem]{Proposition}
\newtheorem {lemma}[theorem]{Lemma}
\newtheorem {corollary}[theorem]{Corollary}
\theoremstyle{definition}
\theoremstyle{theorem}
\def\ba{\begin{array}}
\def\ea{\end{array}}
\def\bea{\begin{eqnarray} \label}
\def\eea{\end{eqnarray}}
\def\be{\begin{equation} \label}
\def\ee{\end{equation}}
\def\bit{\begin{itemize}}
\def\eit{\end{itemize}}
\def\ben{\begin{enumerate}}
\def\een{\end{enumerate}}
\def\EE{\mathbb{E}}
\def\LL{\mathbb{L}}
\def\NN{\mathbb{N}}
\def\PP{\mathbb{P}}
\def\QQ{\mathbb{Q}}
\def\RR{\mathbb{R}}
\def\ZZ{\mathbb{Z}}
\def\d{\delta}
\def\r{\varrho}
\def\G{\Gamma}
\def\L{\Lambda}
\def\cE{\mathcal{E}}
\def\cF{\mathcal{F}}
\def\cH{\mathcal{H}}
\def\cN{\mathcal{N}}
\def\cP{\mathscr{P}}
\def\cS{\mathcal{S}}
\def\cT{\mathcal{T}}
\def\dint{\textup{d}}
\begin{document}

\title{A Mecke-type formula and Markov properties for\\ STIT tessellation processes}

\author{Werner Nagel\corref{cor1}\fnref{fn1}}
\ead{werner.nagel@uni-jena.de}
\address[fn1]{Institut f\"ur Mathematik, Friedrich-Schiller-Universit\"at Jena, Germany}
\cortext[cor1]{Corresponding author}

\author{Linh Ngoc Nguyen\fnref{fn1}}
\ead{rubybell412@gmail.com}

\author{Christoph Th\"ale\fnref{fn2}}
\ead{christoph.thaele@rub.de}
\address[fn2]{Fakult\"at f\"ur Mathematik, Ruhr-Universit\"at Bochum, Germany}

\author{Viola Wei\ss \fnref{fn3}}
\ead{viola.weiss@eah-jena.de}
\address[fn3]{FB Grundlagenwissenschaften, Ernst-Abbe-Hochschule Jena, Germany}

\begin{abstract}
An analogue of the classical Mecke formula for Poisson point processes is proved for the class of space-time STIT tessellation processes. From this key identity the Markov property of a class of associated random processes is derived. This in turn is used to determine the distribution of the number of internal vertices of the typical maximal tessellation segment.
\end{abstract}

\begin{keyword}
Markov property \sep maximal polytopes \sep maximal segments \sep Mecke formula \sep Poisson point process \sep random tessellation \sep STIT tessellation \sep stochastic geometry
\MSC[2010] 60D05, 60G55, 60J75
\end{keyword}

\maketitle

\section{Introduction}

The last decades in stochastic geometry have seen a growing interest in models that deal with random geometric objects evolving in time. As examples we mention random sequential packings \cite{PenroseYukich2002,SchreiberPenroseYukich}, spatial birth and growth models like the Johnson-Mehl growth process \cite{BaryYukich,PenroseYukich2002}, the construction of polygonal Markov random fields \cite{Schreiber05,Schreiber08,Schreiber10}, falling/dead leaf models \cite{Bordenave06,cowan94,Galerne12}, on-line geometric random graphs such as the on-line nearest neighbour graph \cite{PenroseWade,Wade09} or the geometric preferential attachment graph \cite{JacobMoeters,Jordan10,Jordan15}. A particularly attractive class of models studied in stochastic geometry is that of random tessellations. Also within this class, space-time models have found considerable interest. In the present paper we investigate the class of STIT tessellations, which arise as outcomes of a process of consecutive cell divisions. They have been invented in \cite{nagel/weiss05} and since their introduction they have stimulated lots of research, cf.\  \cite{DT,l-r10,Mart14,martnag,MartNagel15,MartNagel16,m/n/w11,nagel/biehler15,nagel/weiss08,STPlane,STSpa,STSecond,STBernoulli,st,ThWeiss10,ThWeiss13}.

The STIT tessellation process itself is a Markov process on the space of tessellations. However, there are several interesting situations in which the problem arises whether or not some classes of associated processes also possess the Markov property. For example, we look at the random process induced by the functional of total surface area within in a bounded window. It becomes clear that this process does not inherit the Markov property from the STIT tessellation process, because this functional does not contain enough information about the tessellation. For this reason, it is an interesting task and one of the main purposes of this paper to extract a class of processes that do inherit the Markov property. Such a Markov property will turn out to be a crucial device in further applications. In the present paper we will deal with the distribution of the number of internal vertices of a typical (and possibly weighted) maximal segment of a STIT tessellation. In particular, we will derive the exact distribution of this random variable and study its moment properties.

One of the crucial steps on our way is to prove a Mecke-type formula for STIT tessellations. Such an identity is well known for Poisson point processes $\Gamma$. It says that the expectation of random variables of the form $\sum_{x\in\Gamma}f(x,\Gamma)$ can be expressed as an expectation of the integral with respect to the intensity measure of $\Gamma$ of the function $f(x,\Gamma+\d_x)$, where the unit-mass Dirac measure $\d_x$ concentrated at $x$ has been added to $\Gamma$, see Chapter 4 in \cite{LastPenroseBook} and also \eqref{eq:SlivnyakMecke} below. Our key technical result, Theorem \ref{theor:slivnyakgeneratorSTIT}, provides a formula for STIT tessellations that is similar in its structure to Mecke's equation for Poisson point processes.

\medskip

The present paper is structured as follows. In Section \ref{sec:Preliminaries} we set up the notation, formally introduce STIT tessellation processes by their Markovian description and collect those properties that are needed. The new results are presented in Section \ref{sec:Results}. In particular, we present there our Mecke-type formula, the Markov properties described above as well as an application to maximal segments. The final Section \ref{sec:proof} contains all the proofs. They are technically quite involved and they crucially depend on the 'direct' global construction of STIT tessellations. For this reason, we have also included a formal description of this construction together with its key properties.  Our proofs contain a lot of formalism and intricate calculations. But such a tedious work for STIT tessellations is like that one for other models: To get deeper results and strict proofs, a high degree of formalisation is necessary. This can also be seen, for example, in  \cite{muche95,muche96,muche98} for Voronoi tessellations.

\section{Preliminaries}\label{sec:Preliminaries}

\subsection{Notation}

Let $\RR^d$ be the Euclidean space of dimension $d\in\{1,2,\ldots\}$. A polytope $p\subset\RR^d$ is the convex hull of a finite point set (containing at least two elements) and the dimension of $p$ is defined as the dimension of its affine hull. The set of all polytopes of dimension $k$ is denoted by $\cP_k$. Moreover, we shall write $\cP_k^0$ for the set of all $k$-dimensional polytopes with their circumcenter at the origin. The spaces $\cP_k$ and $\cP_k^0$ are supplied with the Borel $\sigma$-fields $\mathfrak{B}(\cP_k)$ and $\mathfrak{B}(\cP_k^0)$ induced by the  Hausdorff-distance, respectively. In our context it is convenient to speak of the elements of $\cP_d$ as cells and to denote them by the letter $z$ (for the German word `Zelle'). The interior and the boundary of a set $B\subset\RR^d$ are denoted by $\stackrel{\circ}{B}$ and $ \partial B$, respectively. Moreover, we write $\#(\,\cdot\,)$ for the cardinality of the argument set and ${\bf 1}\{\ldots\}$ for an indicator function, which takes the value $1$ if the condition in brackets is satisfied and $0$ otherwise.

The Lebesgue measure on $\RR^d$ is denoted by $\ell_d$.  Moreover, for $d=1$, we write $\ell_+$ and $\ell_-$ for the Lebesgue measure on the positive and the negative real half-axis $(0,\infty)$ and $(-\infty,0)$, respectively. To simplify the notation in integrals, we  write $\dint s$ instead of $\ell_+ (\dint s)$.

If $\cE$ is a topological space, we denote by $\mathfrak{B}(\cE)$ the Borel $\sigma$-field on $\cE$. If $X$ is a random element taking values in the measurable space $[\cE,\mathfrak{B}(\cE)]$ we shall write $\PP_X$ for its distribution, that is, the image of the probability measure of some underlying probability space under $X$. By $X\stackrel{D}{=}Y$ we shall indicate that the $\cE$-valued random elements $X$ and $Y$ have the same distribution, that is $\PP_X=\PP_Y$.

\subsection{The hyperplane measure $\L$}

Let $\cH$ denote the space of all hyperplanes in $\RR^d$ and $\cH_0$ be the subset of hyperplanes containing the origin. Both spaces are supplied with the usual topology of closed convergence (also called Fell topology, see \cite[Chapter A.2]{kal}, \cite[Chapter A.3]{LastPenroseBook} or \cite{schn/weil}) and thus they carry Borel $\sigma$-fields $\mathfrak{B}(\cH_0)$ and $\mathfrak{B}(\cH)$, respectively.

For $h\in \cH$ we shall write $h_0\in \cH_0$ for the parallel linear subspace and $h_0^\perp$ for the one-dimensional subspace orthogonal to it. The two closed half-spaces generated by a hyperplane $h\in\cH\setminus\cH_0$ are denoted by $h^+$ and $h^-$, respectively, where we use the convention that $h^-$ is the half-space that contains the origin. For a Borel set $B\subset \RR^d$ we define
$$
[B] := \{h\in\cH:h\cap B\neq\emptyset\}\,.
$$
This implies $[B]\in\mathfrak{B}(\cH)$ and that $\cH_0=[\{ 0\} ]$.

Let $\QQ$ be a probability measure on $\cH_0$ and  $\ell_{h_0^\perp}$  the Lebesgue measure on the subspace $h_0^\perp$. The translation invariant measure $\L$ is defined by the relation
\begin{equation}\label{eq:decomplambda}
\int_{\cH} g(h)\,\L(\dint h) = \int_{\cH_0}\int_{h_0^\perp} g(h_0+z)\,\ell_{h_0^\perp}(\dint z)\QQ(\dint h_0)
\end{equation}
for all non-negative measurable functions $g:\cH\to\RR$. Throughout this paper we will assume that $\Lambda$ is such that there is no line in $\RR^d$ with the property that all the hyperplanes in the support of $\Lambda$ are parallel to it. This ensures that, with probability one, all cells of the STIT tessellations considered below are bounded.

Because $\Lambda ([q])\in(0,\infty)$ for all polytopes  $q\in\cP_k$ with $k\in\{1,\ldots,d\}$, we can define the probability measure $\Lambda_q$ on $[\cH , {\mathfrak B}(\cH)]$ by
\begin{equation}\label{eq:problambda}
\Lambda_q(B) = \frac{\Lambda (B \cap [q])}{\Lambda ([q])}\,,\qquad B\in\mathfrak{B}(\cH)\,.
\end{equation}

\subsection{Tessellations}

By definition, a tessellation $y$ of $\RR^d$ is  a countable subset of $\cP_d$ satisfying the following three properties:
\begin{itemize}
\item[(i)] $\stackrel{\circ}{z_1}\cap\stackrel{\circ}{z_2}=\emptyset$ for all $z_1,z_2\in y, \ z_1 \not = z_2$,
\item[(ii)] $\bigcup_{z\in y}z=\RR^d$,
\item[(iii)] $\# \{z\in y:z\cap C\}<\infty$ for all compact $C\subset\RR^d$.
\end{itemize}
A `local' tessellation $y$ of a polytope $W\in\cP_d$ is a finite collection of polytopes contained in $W$ that have disjoint interiors and cover $W$. The set of tessellations of $\RR^d$ is denoted by $\cT$ and we write $\cT_W$ for the set of tessellations of a polytope $W\in\cP_d$. A natural way a local tessellation arises is via restriction to $W$ of a global tessellation. Formally, if $y\in\cT$ and $W\in\cP_d$, we define such a restriction by $y\wedge W:=\{z\cap W:z\in y,\dim(z\cap W)=d\}\in\cT_W$, where $\dim(z\cap W)$ denotes the dimension of the polytope $z\cap W$.

Next, we supply $\cT$ and $\cT_W$ with suitable $\sigma$-fields. For this, we recall that the vague topology on $\cT$ is the topology on $\cT$ induced by functions of the form
$$
\cT\to\RR,  \quad y\mapsto\sum_{z\in y}g(z)\,,
$$
where $g:\cP_d\to\RR$ is any non-negative measurable and bounded function that has compact support, see \cite[Theorem A2.3]{kal}. Now, we let $\mathfrak{B}(\cT)$ be the Borel $\sigma$-field generated by the vague topology on $\cT$.  For $W\in\cP_d$ and $\cT_W$ let $\mathfrak{B}(\cT_W)$ be defined analogously. One can check that the restriction map $y\mapsto y\wedge W$ then becomes measurable.

\subsection{The local STIT tessellation process and maximal polytopes}\label{sect:constrbound}

The random STIT tessellation process is denoted by $\underline{Y}$ and its state at time $t$ by  $Y_t$. Informally, the dynamics of the continuous time random local STIT tessellation process $\underline{Y}\wedge W=(Y_t\wedge W)_{t > 0}$ can be described as follows. At time zero, the initial tessellation $W$ receives an exponentially distributed random lifetime with parameter $\L([W])$. When the lifetime of $W$ is running out, a random hyperplane $h\in[W]$ is selected according to the probability distribution $\L_W$, given in (\ref{eq:problambda}),  and splits $W$ into the two sub-polytopes $W\cap h^+$ and $W\cap h^-$. These two polytopes now receive conditionally  independent (given $h$) exponentially distributed random lifetimes with parameters $\L([W\cap h^+])$ and $\L([W\cap h^-])$, respectively, and now evolve independently according to the same rules, i.e., $W\cap h^+$ is divided by a random hyperplane with law $\L_{W\cap h^+}$, and $W\cap h^-$ is divided by a random hyperplane with law $\L_{W\cap h^-}$, and so on. The cells in $W$ that arise at time $t$ form the local STIT tessellation $Y_t\wedge W$. 

To describe the construction formally, let $W\in\cP_d$ be a polytope and $\L$ be a hyperplane measure defined in \eqref{eq:decomplambda}. For a tessellation $y\in\cT_W$, a cell $z\in y$ and a hyperplane $h\in[\stackrel{\circ}{z}]\setminus \cH_0$ we define the splitting operation $\oslash_{z,h}:\cT_W \to \cT_W$ by
$$
\oslash_{z,h}(y) := (y\setminus \{ z \} )\cup\{z\cap h^+,z\cap h^-\}\,.
$$
In other words, $\oslash_{z,h}(y)$ is the tessellation that arises from $y$ by splitting the cell $z$ by means of the hyperplane $h$. The splitting operation is measurable and extends to global tessellations $y\in\cT$ as well.

By the local STIT tessellation process $(Y_t\wedge W)_{t\geq 0}$ in $W$ driven by the hyperplane measure $\L$ we understand the continuous time pure jump Markov process on $\cT_W$ with initial tessellation $Y_0\wedge W=W$ and generator
$$
\LL g(y) := \sum_{z\in y}\int_{[z]} [g(\oslash_{z,h}(y))-g(y)]\,\Lambda(\dint h)\,,\qquad y\in\cT_W\, ,
$$
for all non-negative measurable $g:\cT_W \to \RR$.

\subsection{The global STIT tessellation process and its maximal polytope process}\label{sec:Consistency}

So far we have described the STIT tessellation process locally within polytopes $W\in\cP_d$. However, there exists also a `global' construction of a STIT tessellation process in $\RR^d$. Since this construction is rather involved and is needed only as a technical device in our proofs, we decided to postpone its description to Section \ref{sec:proof} below. For the moment it is sufficient to confirm that such a process exists. For this, we recall from \cite{nagel/weiss05} the following consistency property. Given two polytopes $W,W'\in\cP_d$ with $W'\subset W$, the law of $(Y_t\wedge W)\wedge W'$ coincides with that of $Y_t\wedge W'$, where, recall, for a tessellation $y\in\cT$ and a polytope $W\in\cP_d$, $y\wedge W$ stands for the restriction of $y$ to $W$. For all $t>0$, this consistency property  together with the consistency theorem for random closed sets \cite[Theorem 2.3.1]{schn/weil} yield  the existence of a random tessellation $Y_t$  with the property that its restriction to any $W\in\cP_d$ has the same distribution as the previously constructed local STIT tessellation $Y_t\wedge W$. The translation invariance of the hyperplane measure $\L$ also ensures that the law of $Y_t$ is invariant under  translations. One can also show that consistency extends to the finite-dimensional distributions of the processes $\underline{Y}\wedge W =(Y_t \wedge W)_{t>0}$. This way, the classical Kolmogorov extension theorem ensures the existence of a global STIT tessellation process $\underline{Y}=(Y_t)_{t>0}$ with the appropriate finite-dimensional distributions.

We will use the notation $\underline{Y}=(Y_t)_{t>0}$ for the random STIT tessellation process, $Y_t$ for its state at time $t>0$.  Respective realizations are denoted by $\underline{y}$ and $y_t$.
The distribution of $(Y_t)_{t>0}$ is written $\PP_{\underline Y}$, and correspondingly the distributions of the other random objects. Furthermore, for a tessellation $y_t$ denote $\partial y_t =\bigcup_{z\in y_t} \partial z$.

As  described above, any extant cell $z$ in a STIT tessellation has a random lifetime, and at the end of its lifetime, at time $s$ say, it is divided by a hyperplane $h$. Then we call $(p,s)\in \cP_{d-1} \times (0,\infty )$ with  $p=z\cap h$ a maximal $(d-1)$-polytope, marked with its birth time $s$.

We emphasize that after its birth, a maximal $(d-1)$-polytope can be intersected by other maximal $(d-1)$-polytopes and thus be subdivided further, independently in both of the half-spaces generated by $h$, i.e., in the two cells adjacent to the maximal polytope. But regardless of such events, it will be referred to as a birth time marked maximal polytope, at all times after its birth.

For any $t>0$ we denote by $M_t=M_t(Y_t)= \sum_{(p,s)\in M, s < t} \delta_{(p,s)}$ the point process of all birth time marked  maximal $(d-1)$-polytopes of the global STIT tessellation $Y_t$. Thus $M_t$ is a point process on the product space $\cP_{d-1}\times(0,\infty)$, i.e., it is a random variable with values in
$\cN(\cP_{d-1}\times(0,\infty))$, the set of locally finite counting measures on $\cP_{d-1}\times(0,\infty)$,
supplied with the Borel $\sigma$-field $\mathfrak{B}(\cN(\cP_{d-1}\times(0,\infty)))$ induced by the vague topology.
As usual, we write $(p,s)\in M_t$ if $M_t (\{ (p,s)\} )>0$.

By $M=M((Y_t)_{t>0})$ we denote the random point process of birth time marked  maximal $(d-1)$-polytopes pertaining to the STIT process $\underline{Y}=(Y_t)_{t>0}$.
Also $M$ is a point process on the state space $\cP_{d-1}\times(0,\infty)$.

We emphasize that, given a realization $m$ of a birth time marked maximal polytope process, one can uniquely reconstruct the trajectory $\underline{y}(m)=(y(m_t))_{t>0}$ of a STIT tessellation process that has $m$ as the realization of the pertaining  maximal $(d-1)$-polytope process.

\section{Results}\label{sec:Results}

\subsection{A Mecke-type formula for STIT tessellations}

For a realization $m$ of the birth time marked process $M$ of  maximal $(d-1)$-polytopes we use the notation $m_{(+t)}:= \{ (p,s+t):\, (p,s)\in m  \} $, to express a time shift by $t$ of all the birth times. Furthermore, for $(p,s)\in m$ we denote by $z(p,s)\in \underline{y}(m)$ the uniquely determined cell in the trajectory $\underline{y}(m)$ that is divided at time $s$ by the maximal polytope $p$. Finally, for a cell $z\in\cP_d$ denote by 
$$
m \wedge z =\{ (p\cap z , s): \, (p,s)\in m,\ p \cap {\stackrel{\circ}{z}}\not= \emptyset \} ,
$$
the restriction of $m$ to $z$.

We are now prepared to present the first main result of this paper, that may be regarded as a Mecke-type formula for STIT tessellations as discussed in some detail after its statement. We postpone the proof to Section \ref{sec:proof} below.

\begin{theorem}\label{theor:slivnyakgeneratorSTIT}
Let $M$ be the process of birth time marked  maximal $(d-1)$-polytopes of a (global) STIT tessellation process $(Y_t)_{t>0}$ driven by a hyperplane measure $\Lambda$. Let $\PP_M$ be the distribution of $M$ and $\PP_{Y_s}$ be that of $Y_s$ at time $s>0$.  Then
\begin{align}\label{eq:slivnyakgeneratorSTITspec}
&\int \sum_{(p,s)\in m} g\big(m \wedge z(p,s) , z(p,s) , p, s\big)\,\PP_M(\dint  m) \notag\\
&=\int \int   \sum_{z\in  y_s}  \int \Bigg[ \int \int 
g\big((z\cap h)\cup(  m^{(1)}_{(+s)} \wedge (z\cap h^+)) \cup (  m^{(2)}_{(+s)} \wedge (z\cap h^-) ),    z,z\cap h, s\big) \notag\\
&\hspace{4cm} \PP_M(\dint   m^{(1)})\,  \PP_M(\dint   m^{(2)}) \Bigg]\, \Lambda_z (\dint h) \, \Lambda ([z])\,  \PP_{Y_s}(\dint { y_s}) \, \dint s
\end{align}
for all non-negative measurable functions $g:\cN(\cP_{d-1}\times(0,\infty))\times\cP_d\times\cP_{d-1}\times(0,\infty)\to\RR$.
\end{theorem}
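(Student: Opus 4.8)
The plan is to read \eqref{eq:slivnyakgeneratorSTITspec} as a Mecke--Palm formula for the point process $M$ of birth time marked maximal polytopes, whose ``points'' are the marked pairs $(p,s)$. On the left-hand side one sums over these points a functional that records the point $(p,s)$ itself, the cell $z(p,s)$ it divides, and the whole trace $m\wedge z(p,s)$ of the history on that cell. The right-hand side is the matching Campbell--Mecke expression: an integration of the functional against the intensity measure of $M$, evaluated on a configuration into which one has \emph{inserted} an extra maximal polytope $z\cap h$ at time $s$ and then let the process run on inside the two freshly created half-cells. So the goal is to match the outer integrals $\Lambda_z(\dint h)\,\Lambda([z])\,\PP_{Y_s}(\dint y_s)\,\dint s$ with the intensity of $M$, and the inner double integral $\PP_M(\dint m^{(1)})\,\PP_M(\dint m^{(2)})$ with the conditional law of the future inside $z\cap h^+$ and $z\cap h^-$.

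For the intensity I would use that births of maximal polytopes are exactly the jumps of the STIT dynamics: in any window $W$ the process $(Y_t\wedge W)_{t>0}$ is the pure jump Markov process with generator $\LL$, and a cell $z$ of the current tessellation is divided by a hyperplane in $\dint h$, $h\in[z]$, at rate $\Lambda(\dint h)$. The Dynkin-type compensator (L\'evy system) identity for this jump process turns the expected sum over births into $\EE\int_0^\infty\sum_{z\in Y_s}\int_{[z]}(\cdots)\,\Lambda(\dint h)\,\dint s$; after writing $\Lambda(\dint h){\bf 1}\{h\in[z]\}=\Lambda_z(\dint h)\,\Lambda([z])$ and passing from $W$ to $\RR^d$ along an exhausting sequence (legitimate because $g\ge 0$, using the consistency recalled in Section \ref{sec:Consistency} and monotone convergence), this yields exactly the outer integrals of the right-hand side.

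The heart of the matter is the conditional-future step. Fixing a birth that produces $p=z\cap h$ at time $s$, the trace $m\wedge z(p,s)$ is generated by $z\cap h$ together with the maximal polytopes born afterwards inside $z\cap h^+$ and inside $z\cap h^-$. Conditionally on the state at time $s$, the Markov property of $\underline{Y}$ and the independence built into the cell-division rule make these two half-cell histories independent, each being a STIT evolution in the corresponding half-cell started afresh at time $s$; by the translation covariance and the consistency recalled in Section \ref{sec:Consistency}, such an evolution has the law of $m^{(i)}_{(+s)}\wedge(z\cap h^\pm)$ with $m^{(i)}\sim\PP_M$. This identifies the first argument of $g$ and produces the two independent integrations against $\PP_M$.

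The step I expect to be the main obstacle is precisely this factorisation of the future, carried out with full control of measurability and of the infinitely many maximal polytopes present in the global process, and made compatible with the L\'evy-system computation of the intensity. For this reason I would run the whole computation on the explicit global construction of Section \ref{sec:proof}: there the driving randomness is a Poisson input, each maximal polytope corresponds to an ``acting'' point of that input, inserting an extra maximal polytope is a single application of the ordinary Mecke equation \eqref{eq:SlivnyakMecke} for the Poisson process, and the branching independence together with the self-similar restriction to $z\cap h^\pm$ then reduce to bookkeeping on the Poisson marks rather than to an abstract appeal to the Markov property.
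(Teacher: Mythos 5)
Your proposal is correct and, in its final operational form, is essentially the paper's own proof: the paper likewise runs the whole computation on the explicit global Poisson construction of Section~\ref{globalconstr}, inserts the dividing hyperplane via the classical Mecke equation \eqref{eq:SlivnyakMecke}, and obtains the factor $\PP_M(\dint m^{(1)})\,\PP_M(\dint m^{(2)})$ from the branching independence of the two daughter-cell evolutions. The only caveat is that ``a single application of the ordinary Mecke equation'' understates the bookkeeping: because the dividing hyperplanes live in the \emph{marks} of $\Sigma^*$ (and in the separate zero-cell process $\Pi$), the paper must apply Mecke to $\Sigma^*$, then twice to the mark processes, and then once \emph{backwards} to $\Sigma^*$ before the factorisation goes through.
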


 Theorem \ref{theor:slivnyakgeneratorSTIT} shares some similarities with the Mecke formula for Poisson point processes. To re-phrase the latter, let $\G$ be a Poisson point process with $\sigma$-finite intensity measure $\mu$ and distribution $\PP_\G$ on a measurable space $[\cE,\mathfrak{B}(\cE)]$. Then 
\begin{equation}\label{eq:SlivnyakMecke}
\int\sum_{x\in\gamma}g(x,\gamma)\,\PP_\Gamma(\dint\gamma) = \int\int g(x,\gamma+\d_x)\,\mu(\dint x)\, \PP_\Gamma(\dint\gamma)
\end{equation}
for all non-negative measurable functions $g:\cE\times\cN(\cE)\to\RR$, see Chapter 4 in \cite{LastPenroseBook}.
Obviously, $M$ is not a Poisson point process, but formally, the left-hand side of \eqref{eq:slivnyakgeneratorSTITspec} has the same structure as the left-hand side of the Mecke formula for Poisson point processes. Moreover, on the right-hand side of \eqref{eq:slivnyakgeneratorSTITspec} we see that an additional hyperplane $h$ is introduced at time $s$ (applying the intensity measure $\Lambda_z (\dint h) \, \Lambda ([z]) \, \dint s$), which is similar to the right-hand side of the Mecke formula. The main differences are that in \eqref{eq:slivnyakgeneratorSTITspec} for {\em each} cell $z\in y_s$ a hyperplane is added and moreover, after the division of a cell $z$ by a hyperplane $h$, realizations (denoted $m^{(1)}$ and $m^{(2)}$) of independent copies of $M$ are needed to continue the process in time. 

It is also interesting to compare our Theorem \ref{theor:slivnyakgeneratorSTIT} with Theorem 3.1 in \cite{GST}, where the class of so-called branching random tessellation has been investigated. These tessellation processes constitute a far reaching generalization of the concept of STIT tessellation processes and allow, in particular, for the interaction of cells during the random cell division process as well as for marks (colours) attached to the cells that are also allowed to influence the cell splitting mechanism. Specialized to our context, this result says that for any fixed $t>0$,
\begin{equation}\label{eq:GST}
\begin{split}
&\int \sum_{(p,s)\in m_t}g\big( (y(m_{r}))_{r< s}    ,z(p,s),p,s\big)\,{\bf 1}\{s\leq t\}\,\PP_{M_t}(\dint m_t)\\
&\qquad = \int\int\sum_{z\in y_s}\Big[\int g\big((y_r)_{r\leq s},z,z\cap h,s\big)\,\L_z(\dint h)\,{\bf 1}\{s\leq t\}\Big]\L([z]) \PP_{Y_s}(\dint { y_s}) \, \dint s
\end{split}
\end{equation}
for all non-negative measurable functions $g:\{ \cT ^{(0,s]},\, 0<s<t\} \times\cP_d\times\cP_{d-1}\times(0,t)\to\RR$. 

Here, $\cT ^{(0,s]}$ stands for the class of all measurable mappings from $(0,s]$ to $\cT$ which contain the realisations of a STIT tessellation process on the time interval $(0,s]$. We notice that relation \eqref{eq:GST} is confined to a finite time horizon for technical reasons. Another significant difference is that in \eqref{eq:GST} the functions $g$ are allowed to depend on the evolution that took place in the past of a given time $s$ only (this is reflected by the appearance of $(y_r)_{r\leq s}$). In contrast, the function $g$ in Theorem \ref{theor:slivnyakgeneratorSTIT} above can depend on a potentially infinite time horizon, including the evolution \textit{after} the birth of a particular maximal polytope. On the other hand, relation \eqref{eq:GST} allows for functions that do not only depend on the tessellation within the cell in which a maximal polytope is born, but also on its surrounding (and the colors attached to the cells within this surrounding).

\subsection{Application to maximal polytopes}\label{sec:appl}

We consider the $k$-dimensional faces of maximal $(d-1)$-polytopes, and we refer to them as maximal $k$-polytopes, $k=0,...,d-2$. They appear as the intersection of a sequence of $d-k$ maximal polytopes of dimension $d-1$. It is important to note that for dimensions $d \geq 3$ not all  intersections of $d-k$ maximal polytopes (even if the intersection has dimension $k$) are faces of maximal polytopes. To see this, consider e.g. three maximal $(d-1)$-polytopes $p_1,p_2,p_3$ such that $p_1\cap p_2\cap p_3 \not= \emptyset$, and $p_2,p_3$ are located in different half-spaces generated by the hyperplane containing $p_1$. In view of this, the polytopes which generate a  maximal $k$-polytope have to fulfill additional conditions, which will be formalized in the proof of Proposition \ref{lem:kdaxpolintersectSTITlang}. 

For a realization $m$ of the point process $M$ and for $k=0,...,d-1$,  let
$$
((p_1,s_1),\ldots , (p_{d-k},s_{d-k}))\in m^{d-k}
$$ 
denote a tuple of maximal polytopes together with  their birth times. 
We denote such a tuple by $({\bf p,s},k)\in {m}^{d-k}$  if and only if $s_1<\ldots <s_{d-k}$ and $\overline{\bf p}=\bigcap_{i=1}^{d-k} p_i$ is a maximal $k$-polytope of the STIT tessellation process. If $k<d-1$ this is a $k$-dimensional face of a maximal $(d-1)$-polytope. 
 In this case we call 
$(\overline{\bf p},{\bf s} )=\left( \bigcap_{i=1}^{d-k} p_i ,{\bf s}\right)$ a maximal $k$-polytope of the STIT tessellation process, marked with its birth time tuple ${\bf s}$. Accordingly, we denote ${\bf h}=(h_1,\ldots h_{d-k})\in \cH ^{d-k}$ and 
$\overline{\bf h}=\bigcap_{i=1}^{d-k} h_{i}$. If we write $({\bf p,s},k)\in {m}_t^{d-k}$ we mean that  $({\bf p,s},k)\in {m}^{d-k}$ and that $s_{d-k}<t$.

In the following proposition we consider, for a fixed time parameter $t>0$ and a fixed dimension $k\leq d-1$, the set of all birth time marked maximal $k$-polytopes $(\overline{\bf p},{\bf s} )$ and the trace of the STIT tessellation on them, that is, the intersection 
\begin{equation}\label{eq:sqcap}
m_t  \sqcap \ \overline{\bf p}:=(m_t  \setminus \{ (p_1,s_1), \ldots , (p_{d-k},s_{d-k}) \} ) \cap \overline{\bf p}
\end{equation}
of $\overline{\bf p}$ with the other maximal $(d-1)$-polytopes of $m_t$. Note that $m_t  \sqcap \ \overline{\bf p}$ describes the tessellation structure induced by $m_t$ in the (relative) interior of the maximal $k$-polytope $\overline{\bf p}$.

\begin{proposition}\label{lem:kdaxpolintersectSTITlang}
For $t>0$ all non-negative measurable functions $g: \cP_k\times(0,t)^{d-k}\times \mathfrak{B}(\RR^d) \to \RR$, 
\begin{align*}\label{eq:kdmaxpolintersectSTITlang}
& \int \displaystyle {\sum_{({\bf p,s},k)\in {m_t^{d-k}}} }
g\left( \overline{\bf p}, {\bf s}, m_t  \sqcap \ \overline{\bf p} \right) 
\PP_{{M_t}} (\dint  m_t) \\
&=
2^{d-k-1} \int \ldots \int \sum_{z\in  y_{s_{d-k}}} \textstyle{
g\Big(  z\cap \overline{\bf h},{\bf s} , 
 z\cap \overline{\bf h} \cap \left[ \bigcup\limits_{i=1}^{d-k-1} \partial y_{t -s_{i}}^{(i)} \cup \partial y_{t -s_{d-k}}^{+} 
 \cup \partial y_{t -s_{d-k}}^{-} \right] \Big) \  }\\
&\qquad\PP_{\underline Y}^{\otimes (d-k+1)} (\dint ({\underline y^{(1)}}, \ldots ,
{\underline y^{(d-k-1)}},{\underline y^{+}},{\underline y^{-}}))
\ {\bf 1}\left\{  z\cap \overline{\bf h}\not= \emptyset \right\} \, \Lambda^{\otimes (d-k)} (\dint {\bf h}) \PP_{Y_{s_{d-k}}} (\dint {y_{s_{d-k}}}) \\
&\qquad  \cdot{\bf 1} \{ 0< s_1<\ldots < s_{d-k}<t\}\,  \dint s_1 \ldots \dint s_{d-k} \,.
\end{align*}
\end{proposition}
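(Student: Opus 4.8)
The plan is to argue by induction on $d-k\in\{1,\dots,d\}$, using the Mecke-type identity \eqref{eq:slivnyakgeneratorSTITspec} to strip off the \emph{first-born} of the $d-k$ generating maximal polytopes and then invoking the inductive hypothesis on the continuations it produces.

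\emph{Base case $d-k=1$.} Here $2^{d-k-1}=1$ and a single maximal $(d-1)$-polytope $p_1=z\cap h_1$, born at time $s_1$, is involved. I would apply \eqref{eq:slivnyakgeneratorSTITspec} with a test function reading off $\overline{\bf p}=p_1$, the time $s_1$, and the trace $m_t\sqcap p_1$, inserting ${\bf 1}\{s_1<t\}$ to pass from $M$ to $M_t$. Since the cell $z=z(p_1,s_1)$ is undivided just before $s_1$, no maximal polytope born earlier meets the relative interior of $p_1$, so the entire trace is generated by the two continuations $m^{(1)}_{(+s_1)}\wedge(z\cap h_1^+)$ and $m^{(2)}_{(+s_1)}\wedge(z\cap h_1^-)$ on the right-hand side of \eqref{eq:slivnyakgeneratorSTITspec}. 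Using the consistency and time-shift properties of Section \ref{sec:Consistency} to identify each continuation, observed up to time $t$, with an independent STIT process run for time $t-s_1$, turns these into $\partial y^{+}_{t-s_1}$ and $\partial y^{-}_{t-s_1}$, which is exactly the claimed right-hand side for $d-k=1$.

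\emph{Inductive step.} I would first single out the first-born polytope $(p_1,s_1)$ in the sum over $m_t^{d-k}$ and sum over the tuples $((p_2,s_2),\dots,(p_{d-k},s_{d-k}))$ completing it to a birth-time-marked maximal $k$-polytope. The key preliminary observation is that $p_2,\dots,p_{d-k}$, the face $\overline{\bf p}$, and every cut contributing to $m_t\sqcap\overline{\bf p}$ lie inside the cell $z=z(p_1,s_1)$ (they are all born after $s_1$ in descendants of $z$); hence the inner sum together with $g(\overline{\bf p},{\bf s},m_t\sqcap\overline{\bf p})$ is a measurable function of $(m_t\wedge z,z,p_1,s_1)$ only, and \eqref{eq:slivnyakgeneratorSTITspec} applies. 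Peeling $(p_1,s_1)$ introduces $z\in y_{s_1}$, the hyperplane $h_1$ with $p_1=z\cap h_1$ and weight $\Lambda_z(\dint h_1)\Lambda([z])={\bf 1}\{h_1\in[z]\}\Lambda(\dint h_1)$, the time $s_1$, and two independent copies $m^{(1)},m^{(2)}$ of $M$ living in $z\cap h_1^{+}$ and $z\cap h_1^{-}$.

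The geometric heart of the step is then that, because $z$ has been split by $h_1$, the later polytopes $p_2,\dots,p_{d-k}$ lie (after the shift by $-s_1$) entirely in one of the two continuations, say $m^{(j)}$, where they form a birth-time-marked maximal $(k+1)$-polytope $\overline{\bf p}'=p_2\cap\dots\cap p_{d-k}$ abutting the bounding hyperplane $h_1$ with $\overline{\bf p}=\overline{\bf p}'\cap h_1=z\cap\overline{\bf h}$; that $p_2,\dots,p_{d-k}$ cannot straddle $h_1$ is precisely the `additional condition' flagged after the statement (the excluded case of two polytopes in opposite half-spaces of $h_1$). Splitting the inner sum according to the side $j\in\{1,2\}$ and exploiting that $m^{(1)},m^{(2)}$ are i.i.d.\ copies of $M$ with symmetric roles of $h_1^{+},h_1^{-}$, the two contributions integrate to the same value, which produces the extra factor $2$. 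Fixing the face to be built in $m^{(1)}$ and applying the inductive hypothesis (dimension $k+1$, horizon $t-s_1$, carrying the factor $2^{d-k-2}$) to $m^{(1)}$ rewrites the inner integral, unpacking the trace of $\overline{\bf p}'$ into the $d-k$ independent processes $\underline y^{(2)},\dots,\underline y^{(d-k-1)},\underline y^{+},\underline y^{-}$, the last polytope $p_{d-k}$ contributing the two-sided pair $\partial y^{\pm}_{t-s_{d-k}}$, and producing the integrals over $h_2,\dots,h_{d-k}$ and a tessellation at the last time; restricting these to $\overline{\bf h}\subset h_1$ gives the $m^{(1)}$-part of $m_t\sqcap\overline{\bf p}$. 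The remaining continuation $m^{(2)}$ supplies, exactly as in the base case, the last process $\partial y^{(1)}_{t-s_1}$, the $m^{(2)}$-part of the trace, and the factors combine to $2\cdot 2^{d-k-2}=2^{d-k-1}$.

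What remains, and what I expect to be the main obstacle, is twofold. First, one must formalise the `additional conditions' characterising when $(p_1,\dots,p_{d-k})$ generate a genuine maximal $k$-polytope and prove the geometric claim above rigorously (localisation of the later polytopes in a single continuation, $\overline{\bf p}=\overline{\bf p}'\cap h_1$, and the matching $z\cap\overline{\bf h}=z'\cap\overline{\bf h}$ for the descendant cell $z'$ produced by the inductive hypothesis), so that $m_t\sqcap\overline{\bf p}$ splits without loss or double counting into its $m^{(1)}$- and $m^{(2)}$-parts. Second, the peeling leaves two nested tessellation integrals, $\PP_{Y_{s_1}}$ from the outer step and $\PP_{Y_{s_{d-k}-s_1}}$ from the inductive hypothesis applied in $z\cap h_1^{+}$; these must be merged into the single integral $\PP_{Y_{s_{d-k}}}$ over a cell $z$ at the last time appearing in the statement. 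This recomposition is where the Markov (semigroup) and consistency properties of the global STIT process, together with its direct global construction, enter decisively, and it is also what reconciles the weight ${\bf 1}\{h_1\in[z]\}\Lambda(\dint h_1)$ with the plain factor $\Lambda^{\otimes(d-k)}(\dint{\bf h})\,{\bf 1}\{z\cap\overline{\bf h}\neq\emptyset\}$ of the statement. Everything else is careful manipulation of the ordering indicator ${\bf 1}\{0<s_1<\dots<s_{d-k}<t\}$, the time shifts $m_{(+s)}$, and Fubini's theorem.
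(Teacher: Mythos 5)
Your overall strategy coincides with the paper's: peel off the generating maximal $(d-1)$-polytopes one at a time with Theorem \ref{theor:slivnyakgeneratorSTIT}, pick up a factor $2$ at each of the first $d-k-1$ peels from the two half-space choices, and merge the nested tessellation integrals at the end (the paper phrases this as a repeated application rather than a formal induction, but that difference is cosmetic). The problem is that the two steps you explicitly defer as ``obstacles'' are not loose ends --- they are the substantive content of the proof, and one of them conceals a structural difficulty with your induction.

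First, merging $\PP_{Y_{s_1}}$ and $\PP_{Y_{s_{d-k}-s_1}}$ into $\PP_{Y_{s_{d-k}}}$ is not a soft consequence of ``Markov and consistency properties'': the paper isolates it as Lemma \ref{lem:cellinclusion}, whose proof needs two specific ingredients you do not supply. One is the geometric characterisation that a cell $z_2$ of the continuation process in $z_1\cap h_1^+$ satisfies $z_2\cap h_1\in\cF_{d-1}(z_2)$ if and only if $z_2=z\cap h_1^+$ for a cell $z$ of the iterated tessellation $y_{s_2-s_1}\wedge z_1$ with $z\cap h_1\neq\emptyset$; this facet condition is exactly what sets up the bijection between continuation cells adjacent to $h_1$ and cells of a single later tessellation hit by $h_1$, and hence what rules out loss or double counting in the trace decomposition you describe. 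The other is the iteration identity \eqref{iterate}, $Y_{s_2}\stackrel{D}{=}Y_{s_1}\boxplus\vec Y_{s_2-s_1}$, which converts the resulting nested integral into the single integral against $\PP_{Y_{s_2}}$ and simultaneously turns ${\bf 1}\{h_1\in[z_1]\}$ into the indicator ${\bf 1}\{z\cap\overline{\bf h}\neq\emptyset\}$ of the statement. Without this lemma your right-hand side cannot be brought into the stated form. Second, your induction is on the Proposition itself, but in the inductive step you must apply the hypothesis to the continuation $m^{(1)}$ \emph{restricted to the random cell} $z\cap h_1^+$, and moreover for faces $\overline{\bf p}'$ that abut the facet $z\cap h_1$; the inductive hypothesis, as stated, concerns the global process $M_{t'}$ and sums over its global faces. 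Making this step legitimate requires either a localized restatement of the Proposition or a separate argument identifying faces of the restricted process (including those touching the boundary facet) with global faces intersected with the cell --- this is itself nontrivial, for the same reason that not every intersection of $d-k$ maximal polytopes is a face. The paper avoids this issue altogether: each peel is an application of Theorem \ref{theor:slivnyakgeneratorSTIT} to a sum over \emph{single} maximal polytopes of a restricted process, which the theorem's general test function covers directly, with all face and dimension conditions carried along as indicators until Lemma \ref{lem:cellinclusion} is invoked at the end. Your plan can very likely be completed, but as it stands the proof of the key merging lemma and the legitimisation of the recursive step are missing.
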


If the function $g$ in the previous lemma depends on the birth time marked maximal $k$-polytope only, then the restriction to a fixed time $t>0$ can be omitted, and the result can be modified as follows.

\begin{corollary}\label{lem:kdaxpolintersectSTIT}
For all non-negative measurable functions $g:\cP_k\times(0,\infty )^{d-k} \to \RR$, 
\begin{eqnarray*}\label{eq:kdmaxpolintersectSTIT}
&& \int \displaystyle {\sum_{({\bf p,s},k)\in {m^{d-k}}} }
g\left( \overline{\bf p}, {\bf s}\right) 
\PP_{{M}} (\dint  m) 
\\ &&\\
&=& 2^{d-k-1} \int \ldots \int \sum_{z\in  y_{s_{d-k}}} \textstyle{
g(  z\cap \overline{\bf h},{\bf s})\  }  
\  {\bf 1}\left\{  z\cap \overline{\bf h}\not= \emptyset \right\} \, \Lambda^{\otimes (d-k)} (\dint {\bf h}) \PP_{Y_{s_{d-k}}} (\dint { y_{s_{d-k}}})
\\ &&  \\
&&  
\qquad  \qquad \qquad \qquad\cdot{\bf 1} \{ 0< s_1<\ldots < s_{d-k}\}\,  \dint s_1 \ldots \dint s_{d-k} \,.
  \end{eqnarray*}
\end{corollary}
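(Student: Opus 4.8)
The plan is to derive the corollary directly from Proposition~\ref{lem:kdaxpolintersectSTITlang} in three moves: specialise the test function so that it ignores the induced trace, integrate out the now-redundant auxiliary trajectories on the right-hand side, and finally send the time horizon $t$ to infinity by monotone convergence.

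First I would apply Proposition~\ref{lem:kdaxpolintersectSTITlang} to the test function $\tilde g(\overline{\bf p},{\bf s},B):=g(\overline{\bf p},{\bf s})$, which simply discards its set-valued third argument; this is admissible because $(0,t)^{d-k}\subset(0,\infty)^{d-k}$. On the right-hand side the integrand then no longer involves the trace set occurring as the third argument of $g$, and hence no longer depends on the copies $\underline y^{(1)},\ldots,\underline y^{(d-k-1)},\underline y^{+},\underline y^{-}$ of the STIT process. Since $\PP_{\underline Y}^{\otimes(d-k+1)}$ is a product of probability measures, Tonelli's theorem lets me integrate these $d-k+1$ trajectories out to the constant $1$. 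What remains is \emph{exactly} the right-hand side of the corollary, with the single difference that the final indicator reads ${\bf 1}\{0<s_1<\cdots<s_{d-k}<t\}$ in place of ${\bf 1}\{0<s_1<\cdots<s_{d-k}\}$.

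Next I would rewrite the left-hand side. By the definition of $m_t^{d-k}$, a tuple $({\bf p,s},k)$ lies in $m_t^{d-k}$ precisely when $({\bf p,s},k)\in m^{d-k}$ and $s_{d-k}<t$; moreover $\overline{\bf p}=\bigcap_{i=1}^{d-k}p_i$ and its mark ${\bf s}$ are determined entirely by the constituent maximal $(d-1)$-polytopes, all of which are already born by time $s_{d-k}<t$. Since $M_t$ is the restriction of $M$ to birth times below $t$, so that $\PP_{M_t}$ is the image of $\PP_M$ under this restriction, and since the summand $g(\overline{\bf p},{\bf s})$ is a functional of $M_t$ alone, it follows that
\[
\int\sum_{({\bf p,s},k)\in m_t^{d-k}}g(\overline{\bf p},{\bf s})\,\PP_{M_t}(\dint m_t)
=\int\sum_{({\bf p,s},k)\in m^{d-k}}{\bf 1}\{s_{d-k}<t\}\,g(\overline{\bf p},{\bf s})\,\PP_M(\dint m).
\]
Together with the previous step this establishes the asserted identity with the extra factor ${\bf 1}\{s_{d-k}<t\}$ present on both sides.

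Finally I would let $t\to\infty$. As $g\geq0$, both the left-hand summand ${\bf 1}\{s_{d-k}<t\}\,g(\overline{\bf p},{\bf s})$ and the right-hand integrand carrying ${\bf 1}\{0<s_1<\cdots<s_{d-k}<t\}$ increase monotonically to their cut-off-free counterparts, so the monotone convergence theorem transports the equality to the limit and yields the corollary (both sides possibly being $+\infty$, which is harmless). The step I expect to be the most delicate is the reformulation of the left-hand side: one has to argue, using the consistency of the global construction recalled in Section~\ref{sec:Consistency}, that the time-truncated sum genuinely depends on $M$ only through its restriction $M_t$, so that integration against $\PP_{M_t}$ and against $\PP_M$ really do agree. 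The remaining manipulations, namely integrating out the product measure and passing to the monotone limit, are routine.
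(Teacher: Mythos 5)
Your proof is correct and takes essentially the same route as the paper, which states the corollary as an immediate consequence of Proposition \ref{lem:kdaxpolintersectSTITlang} once $g$ is taken to be independent of the trace argument and the time restriction is removed. Your three steps --- specialising the test function, integrating out the auxiliary trajectories against the product probability measure $\PP_{\underline Y}^{\otimes (d-k+1)}$ by Tonelli, and passing to the limit $t\to\infty$ by monotone convergence on both sides (using that $({\bf p,s},k)\in m_t^{d-k}$ means exactly $({\bf p,s},k)\in m^{d-k}$ with $s_{d-k}<t$) --- supply precisely the details the paper leaves implicit.
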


\subsection{Densities and distributions of typical polytopes}
For a fixed $k\in\{0,\ldots,d-1\}$ and a fixed time $t>0$, let us consider the marked  point process $\Phi_t$ of circumcenters of maximal $k$-polytopes of the STIT tessellation $Y_t$, which we mark with the maximal $(d-1)$-polytopes and their birth times and with the  `internal structure' of the  maximal $k$-polytopes induced by $M_t$.

For a polytope $q$ denote by $c(q)$ its circumcenter and define the mapping 
$$
m_t \mapsto \{ (c(\overline{\bf p}), \overline{\bf p}-c(\overline{\bf p}), {\bf s},  m_t  \sqcap \ \overline{\bf p}):\ ({\bf p,s},k)\in {m_t^{d-k}} \}\,.
$$ 
By $\PP_{\Phi _t }$ we denote the image measure of $\PP_{M_t}$ under this mapping. Next, we define
\begin{equation}\label{eq:rkj}
\r_{k,t}^{(j)} :=\frac{1}{\ell_d(B)}\int\sum_{(x,q, {\bf s},T)\in \varphi_t}{\bf 1}\{ x\in B \} V_j(q)\,\PP_{\Phi_t}(\dint \varphi_t)
\end{equation}
for $B\in\mathfrak{B}(\RR^d)$ with $0<\ell_d(B)<\infty$, $t>0$, $k\in\{0,\ldots,d-1\}$, and  $V_j$ is the $j$th intrinsic volume, $j\in\{0,\ldots,k\}$. In particular, $\r_{k,t}^{(0)}$ is the intensity of the point process of circumcenters of maximal $k$-polytopes and in general, $\r_{k,t}^{(j)}$ is the mean cumulative (or total) $V_j$th intrinsic volume of all maximal $k$-polytopes per unit volume.

Campbell's theorem \cite[Proposition 2.7]{LastPenroseBook} implies that the probability measure $\PP_{\Phi_t}$ can be dis\-integrated, that is, there exists a probability measure $\QQ_{(\overline{\bf P}, {\boldsymbol{\beta}},\tau ),t}$ such that the Palm formula
\begin{equation}\label{eq:kdmaxpolPalm}
\begin{split}
&\int \sum_{(x,q,{\bf s}, T)\in \varphi_t} g(x, q, {\bf s},T)\,\PP_{\Phi _t} (\dint \varphi_t) \\
&\qquad =\r_{k,t}^{(0)}\int  \int g(x, q, {\bf s}, T ) \, \QQ_{(\overline{\bf P}, {\boldsymbol{\beta}}, \tau ),t} (\dint (q, {\bf s}, T)) \ell_d (\dint x)
\end{split}
\end{equation}
holds for all non-negative measurable functions $g:\RR^d\times \cP_k\times(0,t)^{d-k}\times \mathfrak{B}(\RR^d) \to \RR$.  A random $k$-dimensional polytope of  $\cP_k$ (endowed with the tuple of its birth times and the internal structure on it)  with distribution $\QQ_{(\overline{\bf P}, {\boldsymbol{\beta}}, \tau ),t}$ is called a typical maximal $k$-polytope of the tessellation $Y_t$.

In what follows, we also consider typical weighted  maximal $k$-polytopes of $Y_t$, with the intrinsic volumes $V_j$, $0\le j \le k$, as weights. Their distribution $\QQ^{(j)}_{(\overline{\bf P}, {\boldsymbol{\beta}},\tau ),t}$ is defined by the weighted Palm formula
\begin{equation}\label{eq:weighting}
\begin{split}
&\left[ \int V_j(q) \,\QQ_{\overline{\bf P},t}(\dint q)\right]^{-1} \int \sum_{(x,q,{\bf s}, T)\in \varphi_t}  V_j(q) g(x, q, {\bf s},T)\,\PP_{\Phi _t} (\dint \varphi_t)\\
&\qquad = \r_{k,t}^{(0)} \int  \int g(x, q, {\bf s}, T )\, \QQ^{(j)}_{(\overline{\bf P}, {\boldsymbol{ \beta}}, \tau),t} (\dint (q, {\bf s},T)) \ell_d (\dint x)\,,
\end{split}
\end{equation}
where $\QQ_{\overline{\bf P},t}$ is the marginal distribution of $\QQ_{(\overline{\bf P}, {\boldsymbol{\beta}},\tau ),t}$ for $\overline{\bf P}$. Note that $ \QQ^{(0)}_{(\overline{\bf P}, {\boldsymbol{ \beta}},\tau ),t}= \QQ_{(\overline{\bf P}, {\boldsymbol{ \beta}},\tau ),t}$. Since $\PP_{\Phi _t }$ is the image measure of $\PP_{M_t}$, the right hand side of \eqref{eq:weighting} can be transformed accordingly. Namely, for $d\geq 2$, $k\in\{0,\ldots,d-1\}$, $g: \cP_k\times(0,t)^{d-k}\times \mathfrak{B}(\RR^d) \rightarrow\RR$ non-negative and measurable, and $t>0$, it holds 
\begin{equation}\label{eq:maxpolbirth}
\begin{split}
& \r_{k,t}^{(j)}\, \int g(q, {\bf s},T) \, \QQ^{(j)}_{(\overline{\bf P}, {\boldsymbol{ \beta}},\tau ),t} (\dint (q, {\bf s},T))\\
&\qquad = \int \sum_{({\bf p,s},k)\in {m_t^{d-k}}}  V_j(\overline{\bf p})\cdot {\bf 1}\{c(\overline{\bf p})\in {[0,1]^d}\}\\
&\qquad\qquad\qquad\qquad\qquad\qquad  \cdot  g(\overline{\bf p}-c(\overline{\bf p}), {\bf s}, (m_t\sqcap \overline{\bf p})-c(\overline{\bf p}))\,\PP_{ M_t} (\dint m_t)\,.
\end{split}
 \end{equation}

Combining  (\ref{eq:rkj}) and (\ref{eq:kdmaxpolPalm}) immediately leads to the identity
\begin{equation}\label{eq:meanvalueproduct}
\r_{k,t}^{(j)} = \r_{k,t}^{(0)} \,  \int  V_j(q) \, \QQ_{\overline{\bf P},t} (\dint q)\,.
\end{equation}
Moreover, using the scaling property \eqref{eq:STITglobalScaling} of STIT tessellations and the homogeneity of  the intrinsic volumes, one easily checks that for $t>0$, $k\in\{0,\ldots,d-1\}$ and $j\in\{0,\ldots,k\}$,
\begin{equation}\label{eq:ScalingRho}
\r_{k,t}^{(j)}=t^{d-j}\r_{k,1}^{(j)} .
\end{equation}

 Proposition \ref{lem:kdaxpolintersectSTITlang}  or Corollary \ref{lem:kdaxpolintersectSTIT} together with an integration with respect to the time coordinates $s_1,\ldots ,s_{d-k-1}$ imply that $ \r_{k,t}^{(j)}$ can be represented as
\begin{equation}\label{eq:rhoformel}
\begin{split}
\r_{k,t}^{(j)} &= 2^{d-k-1} \frac{1}{\ell_d(B)} \int \int \int \sum_{z\in  y_{s_{d-k}}} \textstyle{{\bf 1}\{ c(z\cap \overline{\bf h})\in B \} V_j(z\cap \overline{\bf h})}\\
&\qquad  \PP_{ Y_{s_{d-k}}} (\dint { y_{s_{d-k}}}) \Lambda^{\otimes (d-k)} (\dint {\bf h}) 
{\bf 1} \{ 0<  s_{d-k}< t\}\,  \frac{s_{d-k}^{d-k-1}}{(d-k-1)!} \dint s_{d-k}\,.
\end{split}
\end{equation}
It will be useful to have a more concise representation for $\r_{k,1}^{(j)}$, which in view of \eqref{eq:ScalingRho} is no restriction of generality.

\begin{proposition}\label{lem:rho1}
For all $B\in\mathfrak{B}(\RR^d)$ with $0<\ell_d(B)<\infty$,  $k\in\{0,\ldots,d-1\}$ and $j\in\{0,\ldots,k\}$ one has that
\begin{equation*}\label{eq:rho1}
\begin{split}
 \r_{k,1}^{(j)}&= 2^{d-k-1}  \frac{1}{(d-k-1)!\, (d-j)}\cdot \frac{1}{\ell_d(B)}\\
 &\qquad \cdot\int  \int \sum_{z\in  y_1} \textstyle{
 {\bf 1}\{ c(z\cap \overline{\bf h})\in B \} V_j(z\cap \overline{\bf h})  }  \,   \Lambda^{\otimes (d-k)} (\dint {\bf h}) \PP_{ Y_{1}} (\dint { y_1}) \,.
\end{split}
\end{equation*}
\end{proposition}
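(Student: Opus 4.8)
The plan is to start from the representation \eqref{eq:rhoformel} specialised to $t=1$ and to remove the remaining integration over the time variable $s:=s_{d-k}$ by exploiting the scaling property \eqref{eq:STITglobalScaling} of STIT tessellations together with the homogeneity of the intrinsic volumes. Writing
$$
I(s):=\int \int \sum_{z\in y_{s}} {\bf 1}\{c(z\cap\overline{\bf h})\in B\}\,V_j(z\cap\overline{\bf h})\,\Lambda^{\otimes(d-k)}(\dint{\bf h})\,\PP_{Y_{s}}(\dint y_{s})
$$
for the inner double integral in \eqref{eq:rhoformel}, the claim will follow once I show that $I(s)=s^{\,k-j}\,\ell_d(B)\,J$, where $J$ is exactly the $B$-normalised integral appearing in the statement of the proposition. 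Then \eqref{eq:rhoformel} at $t=1$ reads $\r_{k,1}^{(j)}=2^{d-k-1}\tfrac{1}{\ell_d(B)}\int_0^1 I(s)\,\tfrac{s^{d-k-1}}{(d-k-1)!}\,\dint s$, and substituting the expression for $I(s)$ isolates an elementary time integral.

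The heart of the argument is the scaling computation for $I(s)$. First I would use the STIT scaling property in the form $Y_s\stackrel{D}{=}s^{-1}Y_1$ to replace, for each fixed ${\bf h}$, the $\PP_{Y_s}$-integral by a $\PP_{Y_1}$-integral with the generic cell $z$ written as $s^{-1}\zeta$, $\zeta\in y_1$. Next I would substitute ${\bf h}=s^{-1}\hat{\bf h}$ in the outer $\Lambda^{\otimes(d-k)}$-integral; from the decomposition \eqref{eq:decomplambda} one checks that this rescaling produces a Jacobian factor $s^{-1}$ per hyperplane, hence $s^{-(d-k)}$ in total. Under these two substitutions one has $s^{-1}\zeta\cap\overline{\bf h}=s^{-1}(\zeta\cap\overline{\hat{\bf h}})$, so the homogeneity $V_j(\lambda A)=\lambda^j V_j(A)$ contributes a factor $s^{-j}$, while the circumcentre condition becomes $c(\zeta\cap\overline{\hat{\bf h}})\in sB$. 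Collecting everything gives $I(s)=s^{-(d-k)-j}\,\Xi(sB)$, where $\Xi(A):=\int\int\sum_{\zeta\in y_1}{\bf 1}\{c(\zeta\cap\overline{\hat{\bf h}})\in A\}\,V_j(\zeta\cap\overline{\hat{\bf h}})\,\Lambda^{\otimes(d-k)}(\dint\hat{\bf h})\,\PP_{Y_1}(\dint y_1)$.

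It remains to handle the passage from $sB$ back to $B$. Using the stationarity of $Y_1$ together with the translation invariance of $\Lambda$ (and of $V_j$), a routine shift of the arguments shows that $A\mapsto\Xi(A)$ is a translation-invariant, locally finite measure on $\RR^d$, hence a constant multiple of $\ell_d$; that constant is precisely $J=\Xi(B)/\ell_d(B)$. Therefore $\Xi(sB)=J\,\ell_d(sB)=J\,s^{d}\ell_d(B)$, which yields $I(s)=s^{-(d-k)-j+d}J\,\ell_d(B)=s^{\,k-j}\ell_d(B)\,J$ as announced. Plugging this into $\r_{k,1}^{(j)}=2^{d-k-1}\tfrac{1}{\ell_d(B)}\int_0^1 I(s)\tfrac{s^{d-k-1}}{(d-k-1)!}\dint s$ leaves $2^{d-k-1}\tfrac{J}{(d-k-1)!}\int_0^1 s^{\,d-j-1}\dint s$, and since $d-j\geq d-k\geq 1>0$ the last integral equals $1/(d-j)$, giving exactly the stated formula. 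I expect the main obstacle to be the bookkeeping of the three independent scaling effects (the factor $s^{-(d-k)}$ from $\Lambda$, the factor $s^{-j}$ from $V_j$, and the factor $s^{d}$ recovered from the $sB\to B$ reduction), and in particular the clean justification of that last step through the translation invariance of $\Xi$; the rest is Tonelli (all integrands are non-negative) and the elementary evaluation of the time integral.
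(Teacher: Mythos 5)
Your proposal is correct and follows essentially the same route as the paper's proof: both start from \eqref{eq:rhoformel} at $t=1$, use the STIT scaling property \eqref{eq:STITglobalScaling} together with the $s^{-(d-k)}$ scaling of $\Lambda^{\otimes(d-k)}$ and the $s^{-j}$ homogeneity of $V_j$ to show that the inner double integral equals $s^{k-j}\,\ell_d(B)\,J$, and then evaluate the elementary time integral $\int_0^1 s^{d-j-1}\,\dint s = 1/(d-j)$. The only difference is in execution of the window-rescaling step ($sB \to B$): the paper routes it through Campbell's theorem for the typical cell and the translation invariance of the induced measure on $k$-flats \cite[Theorem 4.4.1]{schn/weil}, whereas you apply translation invariance directly to the set function $\Xi$ and conclude it is a constant multiple of $\ell_d$ --- both arguments rest on the same invariance properties of $\PP_{Y_1}$ and $\Lambda$.
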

 
\subsection{Markov properties of typical maximal polytopes and their birth time distributions}

 We are now going to apply Proposition \ref{lem:kdaxpolintersectSTITlang} to prove the Markov properties for the joint birth time distribution of the typical maximal $k$-polytope. We start by determining the marginal distribution $\QQ^{(j)}_{{\boldsymbol{ \beta}},t}$, that is, the birth time distribution of the typical $V_j$-weighted maximal $k$-polytope of $Y_t$. Our next proposition largely extends and unifies earlier results for the special case $k=d-1$ and $j=0$ in \cite{STBernoulli} and $d=3$, $k=1$ and $j\in\{0,1\}$ in \cite{TWN12}.

\begin{theorem}\label{thm:GeneralBirthTime} 
Let $d\geq 2$, $k\in\{0,\ldots,d-1\}$, $j\in\{0,\ldots,k\}$ and $t>0$. The distribution $\QQ^{(j)}_{{\boldsymbol{ \beta}},t}$ of the birth times ${\boldsymbol{ \beta}}= (\beta_1,\ldots,\beta_{d-k})$ of the typical $V_j$-weighted   maximal $k$-polytope  has the density
$$
(s_1,\ldots,s_{d-k})\mapsto (d-j)(d-k-1)!{s_{d-k}^{k-j}\over t^{d-j}}\,{\bf 1}\{0<s_1<\ldots<s_{d-k}<t\}
$$
with respect to the Lebesgue measure on $\RR^{d-k}$.
\end{theorem}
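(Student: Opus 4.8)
The starting point is the weighted Palm relation \eqref{eq:maxpolbirth}. I would specialise it to test functions that depend on the birth times only: choosing $g(q,{\bf s},T)=f({\bf s})$ for a non-negative measurable $f$ (and the window $B=[0,1]^d$, so $\ell_d(B)=1$), the left-hand side collapses to $\r_{k,t}^{(j)}\int f({\bf s})\,\QQ^{(j)}_{{\boldsymbol{ \beta}},t}(\dint{\bf s})$, since marginalising $\QQ^{(j)}_{(\overline{\bf P},{\boldsymbol{\beta}},\tau),t}$ over $(q,T)$ gives exactly the birth-time law $\QQ^{(j)}_{{\boldsymbol{ \beta}},t}$. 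To the right-hand side I would apply Proposition \ref{lem:kdaxpolintersectSTITlang} with $g(\overline{\bf p},{\bf s},T)=V_j(\overline{\bf p})\,{\bf 1}\{c(\overline{\bf p})\in[0,1]^d\}\,f({\bf s})$. Because this $g$ does \emph{not} depend on its internal-structure argument $T$, the integration against the $d-k+1$ independent STIT copies $\PP_{\underline Y}^{\otimes(d-k+1)}$ simply integrates a constant and yields the factor $1$. What survives is
\begin{equation*}
\r_{k,t}^{(j)}\int f({\bf s})\,\QQ^{(j)}_{{\boldsymbol{ \beta}},t}(\dint{\bf s})
=\int_{\{0<s_1<\ldots<s_{d-k}<t\}} I(s_{d-k})\,f({\bf s})\,\dint s_1\cdots\dint s_{d-k},
\end{equation*}
where
\begin{equation*}
I(s):=2^{d-k-1}\int\int\sum_{z\in y_{s}}{\bf 1}\{c(z\cap\overline{\bf h})\in[0,1]^d\}\,V_j(z\cap\overline{\bf h})\,{\bf 1}\{z\cap\overline{\bf h}\neq\emptyset\}\,\Lambda^{\otimes(d-k)}(\dint{\bf h})\,\PP_{Y_{s}}(\dint y_{s})
\end{equation*}
depends only on $s_{d-k}=:s$ (the inner integral involves $Y_{s_{d-k}}$ and $\overline{\bf h}$ alone). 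Since $f$ is arbitrary, this identifies the Lebesgue density of $\QQ^{(j)}_{{\boldsymbol{ \beta}},t}$ as $I(s_{d-k})/\r_{k,t}^{(j)}$ on the simplex $\{0<s_1<\ldots<s_{d-k}<t\}$.

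\textbf{Computing $I(s)$ by scaling.} The remaining task is to determine the $s$-dependence of $I(s)$, and here I would use the scaling property \eqref{eq:STITglobalScaling}, which gives $Y_s\stackrel{D}{=}s^{-1}Y_1$. Writing $z=s^{-1}z'$ with $z'\in Y_1$ and substituting ${\bf h}=s^{-1}{\bf h}'$, I would invoke three homogeneities: the hyperplane measure satisfies $\Lambda([\lambda B])=\lambda\Lambda([B])$, so the substitution produces a factor $s^{-(d-k)}$ from $\Lambda^{\otimes(d-k)}$; the $j$th intrinsic volume is homogeneous of degree $j$, giving $V_j(z\cap\overline{\bf h})=s^{-j}V_j(z'\cap\overline{\bf h}')$; and the circumcenter transforms as $c(s^{-1}(z'\cap\overline{\bf h}'))=s^{-1}c(z'\cap\overline{\bf h}')$, so the indicator becomes ${\bf 1}\{c(z'\cap\overline{\bf h}')\in[0,s]^d\}$. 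The integral left over is precisely the one appearing in Proposition \ref{lem:rho1} with window $B=[0,s]^d$; rearranging that proposition gives its value as $(d-k-1)!\,(d-j)\,\r_{k,1}^{(j)}\,\ell_d([0,s]^d)=(d-k-1)!\,(d-j)\,\r_{k,1}^{(j)}\,s^{d}$. Collecting the prefactors yields
\begin{equation*}
I(s)=s^{-(d-k)-j}\cdot(d-k-1)!\,(d-j)\,\r_{k,1}^{(j)}\,s^{d}=(d-k-1)!\,(d-j)\,\r_{k,1}^{(j)}\,s^{\,k-j}.
\end{equation*}

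\textbf{Conclusion.} Finally I would substitute this expression together with $\r_{k,t}^{(j)}=t^{d-j}\r_{k,1}^{(j)}$ from \eqref{eq:ScalingRho} into the density $I(s_{d-k})/\r_{k,t}^{(j)}$. The factor $\r_{k,1}^{(j)}$ cancels, leaving $(d-j)(d-k-1)!\,s_{d-k}^{\,k-j}/t^{d-j}$ on the simplex, which is the asserted density. As a built-in consistency check one may integrate this density over the sub-simplex $\{0<s_1<\ldots<s_{d-k}<s\}$, whose Lebesgue volume is $s^{d-k-1}/(d-k-1)!$, and recover both total mass $1$ and the scaling $\r_{k,t}^{(j)}=\int_0^t I(s)\,s^{d-k-1}/(d-k-1)!\,\dint s=t^{d-j}\r_{k,1}^{(j)}$. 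The only genuinely delicate step is the scaling computation: one must track the three homogeneity exponents and verify that $-(d-k)-j+d=k-j$, and one must use that the integral in Proposition \ref{lem:rho1} is, as stated there, independent of the chosen window so that it may be evaluated at $B=[0,s]^d$.
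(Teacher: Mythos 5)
Your proposal is correct and follows essentially the same route as the paper's proof: reduce to an explicit integral via \eqref{eq:maxpolbirth} together with Proposition \ref{lem:kdaxpolintersectSTITlang} (the paper invokes Corollary \ref{lem:kdaxpolintersectSTIT}, which is exactly your observation that the $\PP_{\underline Y}^{\otimes(d-k+1)}$-integration trivialises for $T$-independent integrands), extract the factor $s_{d-k}^{k-j}$ by the STIT scaling property \eqref{eq:STITglobalScaling}, and normalise using Proposition \ref{lem:rho1} and \eqref{eq:ScalingRho}. The only (cosmetic) difference is in the scaling step: you rescale both the tessellation and the hyperplanes so that the dilation lands on the window, which becomes $[0,s_{d-k}]^d$, and then use the window-independence of Proposition \ref{lem:rho1}, whereas the paper rescales only the tessellation and repeats the Campbell-theorem computation from that proposition's proof to restore the window $[0,1]^d$ — your variant is marginally cleaner since it uses the proposition's statement rather than its proof.
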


After this preparation, the following results can be shown by direct computations.

\begin{corollary}\label{cor:marginalconditional}
Let $d\geq 2$, $k\in\{0,\ldots,d-1\}$ and $j\in\{0,\ldots,k\}$. 
\begin{itemize}
\item[(a)] The marginal distribution $\QQ^{(j)}_{\beta_{d-k},t}$ of the last birth time 
of the typical $V_j$-weighted  maximal $k$-polytope  has the density
$$
s_{d-k}\mapsto (d-j){s_{d-k}^{d-j-1}\over t^{d-j}}\,{\bf 1}\{0< s_{d-k}<t\}
$$
with respect to the Lebesgue measure on $\RR$.
\item[(b)] For all $s_{d-k}<t$, the conditional distribution $\QQ^{(j)}_{{( {\beta_1,\ldots , \beta_{d-k-1}} ),t} |\beta_{d-k}=s_{d-k} } $ of the birth times $ (\beta_1,\ldots,\beta_{d-k-1})$ of the typical  $V_j$-weighted  maximal $k$-polytope, given $\beta_{d-k}=s_{d-k}$ has the density
$$
(s_1,\ldots,s_{d-k-1})\mapsto (d-k-1)!\,  s_{d-k}^{-(d-k-1)}\,{\bf 1}\{0<s_1<\ldots<s_{d-k}\}
$$
with respect to the Lebesgue measure on $\RR^{d-k-1}$. In particular, this conditional distribution does not depend on $j$, and it is the uniform distribution on the $(d-k-1)$-simplex $\{(s_1,\ldots,s_{d-k-1})\in\RR^{d-k-1}:0<s_1<\ldots<s_{d-k-1}<s_{d-k}\}$. 
\end{itemize}
\end{corollary}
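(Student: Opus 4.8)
The plan is to derive Corollary \ref{cor:marginalconditional} directly from the joint density given in Theorem \ref{thm:GeneralBirthTime} by elementary integration. Since the joint density of $\boldsymbol{\beta}=(\beta_1,\ldots,\beta_{d-k})$ is known explicitly, both parts reduce to computing a marginal and a conditional density from a fully specified joint density, so the only real work is carrying out the integrals and identifying the resulting normalisation constants.

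For part (a), I would integrate the joint density of Theorem \ref{thm:GeneralBirthTime} over the variables $s_1,\ldots,s_{d-k-1}$. Because the joint density factorises as a constant times $s_{d-k}^{k-j}$ times the indicator $\mathbf{1}\{0<s_1<\ldots<s_{d-k}<t\}$, and the constant $(d-j)(d-k-1)!\,t^{-(d-j)}$ does not depend on $s_1,\ldots,s_{d-k-1}$, the integral over the ordered simplex $\{0<s_1<\ldots<s_{d-k-1}<s_{d-k}\}$ is simply its volume, namely $s_{d-k}^{\,d-k-1}/(d-k-1)!$. Multiplying this by the prefactor cancels the $(d-k-1)!$ and combines $s_{d-k}^{k-j}\cdot s_{d-k}^{d-k-1}=s_{d-k}^{d-j-1}$, yielding the claimed density $(d-j)\,s_{d-k}^{d-j-1}/t^{d-j}$ on $(0,t)$. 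A quick sanity check that this integrates to $1$ over $(0,t)$ confirms the normalisation.

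For part (b), I would form the conditional density as the ratio of the joint density of Theorem \ref{thm:GeneralBirthTime} to the marginal density of $\beta_{d-k}$ computed in part (a). In this ratio the common factor $(d-j)\,t^{-(d-j)}$ cancels, as does the power of $s_{d-k}$: the joint contributes $s_{d-k}^{k-j}$ and the marginal contributes $s_{d-k}^{d-j-1}=s_{d-k}^{k-j}\cdot s_{d-k}^{d-k-1}$, leaving precisely $(d-k-1)!\,s_{d-k}^{-(d-k-1)}$ on the ordered simplex $\{0<s_1<\ldots<s_{d-k-1}<s_{d-k}\}$. This is the reciprocal of the simplex volume computed in part (a), so it is exactly the constant density of the uniform distribution on that simplex, which also makes the independence from $j$ transparent, since $j$ entered only through factors that cancelled.

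There is no substantial obstacle here; the statement is a routine consequence of Theorem \ref{thm:GeneralBirthTime}. The only point requiring a little care is the bookkeeping of the exponents and factorials so that the cancellations are performed correctly, and checking that the conditional density is indeed constant in $(s_1,\ldots,s_{d-k-1})$ once $\beta_{d-k}=s_{d-k}$ is fixed, which is what identifies it as the uniform law on the simplex. I would present the two computations as short explicit integrations rather than invoking any further machinery.
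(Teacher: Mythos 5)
Your proposal is correct and follows exactly the route the paper intends: the paper derives this corollary ``by direct computations'' from the joint density in Theorem \ref{thm:GeneralBirthTime}, which is precisely your marginalisation over the ordered simplex for part (a) and the joint-to-marginal ratio for part (b). Your exponent bookkeeping ($k-j+d-k-1=d-j-1$ and $k-j-(d-j-1)=-(d-k-1)$) and the identification of the conditional density as the reciprocal of the simplex volume are all accurate.
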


Furthermore, the marginal distribution $\QQ_{(\overline{\bf P}, \beta_{d-k}),t}$    as well as the conditional distribution $\QQ_{\overline{\bf P},t| \beta_{d-k}=s_{d-k} }$ can  be calculated.

\begin{corollary}\label{lem:maxpolbirth}
Let $d\geq 2$, $k\in\{0,\ldots,d-1\}$, $j\in\{0,\ldots,k\}$, $g:\cP_k\times(0,t) \rightarrow\RR$ be non-negative and measurable and $t>0$. Then,
\begin{align*}
&\int g( q, s_{d-k})\, \QQ^{(j)}_{(\overline{\bf P}, { \beta_{d-k}} ),t} \, (\dint (q,  s_{d-k})) \\
&= 
 2^{d-k-1} \left[ \r_{k,t}^{(j)}\right]^{-1}\, t^{d-j}\\
&\qquad\cdot \int \int \int \sum_{z\in  y_{s_{d-k}}} \textstyle{
V_j(z\cap \overline{\bf h})\cdot {\bf 1}\{c(z\cap \overline{\bf h})\in {[0,1]^d}\})\cdot  g(  (z\cap \overline{\bf h})-c(z\cap \overline{\bf h}),s_{d-k})\  } 
\\ 
& \qquad \qquad \Lambda^{\otimes (d-k)} (\dint {\bf h}) \PP_{ Y_{s_{d-k}}} (\dint { y_{s_{d-k}}}){{s_{d-k}^{-(k-j)}}\over {(d-k-1)! (d-j)}  }     \,   
 \QQ^{(j)}_{\beta_{d-k},t}( \dint s_{d-k})\,.
 \end{align*}
In particular, for almost all $s_{d-k} \in (0,t)$ the conditional distribution $\QQ^{(j)}_{{\overline{\bf P} ,t}| \beta_{d-k}=s_{d-k} }$ is given by
\begin{align*} 
&\int \tilde g( q) \, \QQ^{(j)}_{{\overline{\bf P} ,t}| \beta_{d-k}=s_{d-k} }(\dint q) \\
&= 
 2^{d-k-1} \left[ \r_{k,t}^{(j)}\right]^{-1}{{s_{d-k}^{-(k-j)}}\over {(d-k-1)! (d-j)}  }     \,   
t^{d-j}\, \int \int \int \sum_{z\in  y_{s_{d-k}}} 
V_j(z\cap \overline{\bf h})\\
&\qquad \cdot {\bf 1}\{c(z\cap \overline{\bf h})\in {[0,1]^d}\}\cdot  \tilde g( (z\cap \overline{\bf h})-c(z\cap \overline{\bf h})) \,\Lambda^{\otimes (d-k)} (\dint {\bf h}) \PP_{ Y_{s_{d-k}}} (\dint { y_{s_{d-k}}}) 
 \end{align*}
for all non-negative and measurable $\tilde g :\cP_k \rightarrow\RR$.
\end{corollary}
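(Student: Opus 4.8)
The plan is to reduce the statement to the already-established identity \eqref{eq:maxpolbirth} together with Proposition \ref{lem:kdaxpolintersectSTITlang}, and then to carry out two elementary integrations: one that eliminates the auxiliary birth times $s_1,\ldots,s_{d-k-1}$ and the internal tessellation structure, and one that rewrites the surviving $\dint s_{d-k}$-integral as an integral against the known last-birth-time law $\QQ^{(j)}_{\beta_{d-k},t}$ from Corollary \ref{cor:marginalconditional}(a).

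First I would specialise \eqref{eq:maxpolbirth} to a test function that does not see the internal structure, namely $g(q,{\bf s},T)=g(q,s_{d-k})$. Since the marginal of $\QQ^{(j)}_{(\overline{\bf P},{\boldsymbol \beta},\tau),t}$ onto the coordinate $(q,\beta_{d-k})$ is by definition $\QQ^{(j)}_{(\overline{\bf P},\beta_{d-k}),t}$, the left-hand side of \eqref{eq:maxpolbirth} collapses to $\r_{k,t}^{(j)}\int g(q,s_{d-k})\,\QQ^{(j)}_{(\overline{\bf P},\beta_{d-k}),t}(\dint(q,s_{d-k}))$, while the right-hand side becomes a sum over $m_t^{d-k}$ whose summand $V_j(\overline{\bf p})\,{\bf 1}\{c(\overline{\bf p})\in[0,1]^d\}\,g(\overline{\bf p}-c(\overline{\bf p}),s_{d-k})$ is a measurable function of $(\overline{\bf p},{\bf s})$ alone. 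I would then feed precisely this summand into Proposition \ref{lem:kdaxpolintersectSTITlang}. Because it is independent of the third argument (the trace $m_t\sqcap\overline{\bf p}$), the integration against $\PP_{\underline Y}^{\otimes(d-k+1)}$ appearing in that proposition contributes a factor $1$ and drops out, leaving a sum over $z\in y_{s_{d-k}}$ integrated against $\Lambda^{\otimes(d-k)}(\dint{\bf h})$, $\PP_{Y_{s_{d-k}}}(\dint y_{s_{d-k}})$ and ${\bf 1}\{0<s_1<\cdots<s_{d-k}<t\}\,\dint s_1\cdots\dint s_{d-k}$.

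Next, mirroring the computation already carried out in \eqref{eq:rhoformel}, I observe that the surviving integrand depends on the birth times only through $s_{d-k}$ (via $y_{s_{d-k}}$ and the test function), so by Tonelli the times $s_1,\ldots,s_{d-k-1}$ may be integrated out over the simplex $\{0<s_1<\cdots<s_{d-k-1}<s_{d-k}\}$, producing the factor $s_{d-k}^{d-k-1}/(d-k-1)!$. To reach the claimed form I would then replace the Lebesgue element $s_{d-k}^{d-k-1}/(d-k-1)!\,\dint s_{d-k}$ by $\QQ^{(j)}_{\beta_{d-k},t}(\dint s_{d-k})$: using the explicit density $(d-j)s_{d-k}^{d-j-1}t^{-(d-j)}{\bf 1}\{0<s_{d-k}<t\}$ from Corollary \ref{cor:marginalconditional}(a), this substitution contributes exactly the prefactor $t^{d-j}s_{d-k}^{-(k-j)}/((d-k-1)!(d-j))$ of the statement, since $(d-k-1)-(d-j-1)=-(k-j)$. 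Dividing through by $\r_{k,t}^{(j)}$ then yields the first displayed formula. For the conditional statement I would invoke uniqueness of the disintegration of $\QQ^{(j)}_{(\overline{\bf P},\beta_{d-k}),t}$ over its $\beta_{d-k}$-marginal $\QQ^{(j)}_{\beta_{d-k},t}$: taking product test functions $g(q,s_{d-k})=\tilde g(q)f(s_{d-k})$ in the formula just obtained, both sides become integrals of $f$ against $\QQ^{(j)}_{\beta_{d-k},t}$, so that for arbitrary nonnegative measurable $f$ the inner expressions must agree for $\QQ^{(j)}_{\beta_{d-k},t}$-almost every $s_{d-k}$; because this law has a strictly positive Lebesgue density on $(0,t)$, this coincides with Lebesgue-a.e.\ $s_{d-k}\in(0,t)$, and reading off the kernel gives the conditional formula.

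As for the main obstacle, there is no deep difficulty here; the content is essentially bookkeeping built on the preceding results. The only points requiring genuine care are, first, verifying that the integrand is indeed free of $T$ and of $s_1,\ldots,s_{d-k-1}$ so that the two integrations decouple (nonnegativity makes all interchanges legitimate by Tonelli), and second, the change of measure from $\dint s_{d-k}$ to $\QQ^{(j)}_{\beta_{d-k},t}(\dint s_{d-k})$, where one must track the exponent arithmetic so that the prefactor emerges precisely as $t^{d-j}s_{d-k}^{-(k-j)}/((d-k-1)!(d-j))$.
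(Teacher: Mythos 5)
Your proposal is correct and follows essentially the same route as the paper: it derives the first display by combining \eqref{eq:maxpolbirth} with Proposition \ref{lem:kdaxpolintersectSTITlang} (the $T$-independence making the $\PP_{\underline Y}^{\otimes(d-k+1)}$-integration trivial), integrates out $s_1,\ldots,s_{d-k-1}$ to produce the factor $s_{d-k}^{d-k-1}/(d-k-1)!$, and then converts the Lebesgue element into $\QQ^{(j)}_{\beta_{d-k},t}(\dint s_{d-k})$ via the density from Corollary \ref{cor:marginalconditional}(a), with exactly the exponent bookkeeping $(d-k-1)-(d-j-1)=-(k-j)$ that the paper's two-step display encodes. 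Your disintegration argument for the conditional formula is also the intended (if unstated) content of the paper's closing ``and the result follows''.
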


An application of the result obtained so far yields the following conditional independence property, which can also be interpreted as a Markov property for STIT tessellation processes. To formulate it, let  $\QQ^{(j)}_{{\overline{\bf P} ,t}| \beta_{d-k}=s_{d-k} }$ and $\QQ^{(j)}_{{( {\beta_1,\ldots , \beta_{d-k-1}} ),t} |\beta_{d-k}=s_{d-k} }$ denote  conditional distributions (as indicated by their indexes), pertaining to $\QQ^{(j)}_{(\overline{\bf P}, {\boldsymbol{ \beta}} ),t}$, respectively.

 \begin{theorem}\label{markovbirthCL}
 Let $d\geq 2$, $k\in\{0,\ldots,d-1\}$, $j\in\{0,\ldots,k\}$, $g: \cP_k\times(0,t)^{d-k} \rightarrow\RR$ be non-negative and measurable and $t>0$. Then,
\begin{align*} 
& \int  g(q, {\bf s} ) \, \QQ^{(j)}_{(\overline{\bf P}, {\beta_1,\ldots , \beta_{d-k}}),t}  (\dint (q, {\bf s})) = \int \int \int  g( q, {\bf s} ) \\
&\qquad \QQ^{(j)}_{{\overline{\bf P} ,t}| \beta_{d-k}=s_{d-k} } (\dint q)
  \QQ^{(j)}_{{( {\beta_1,\ldots , \beta_{d-k-1}} ),t} |\beta_{d-k}=s_{d-k} } 
  (\dint (s_1,\ldots , s_{d-k-1}))
  \QQ^{(j)}_{\beta_{d-k}} (\dint s_{d-k} )\,,
\end{align*} 
which is equivalent to the conditional independence of the typical  $V_j$-weighted  maximal $k$-polytope $\overline{\bf P}$ and $({\beta_1,\ldots , \beta_{d-k-1}} )$, given the last birth time $\beta_{d-k}=s_{d-k}$.
\end{theorem}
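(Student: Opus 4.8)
The plan is to reduce to product test functions and then read off the factorisation directly from the representations already established. By a functional monotone-class argument it suffices to prove the identity for functions of the form $g(q,{\bf s})=\tilde g(q)\,\varphi({\bf s})$, since these generate the product $\sigma$-field on $\cP_k\times(0,t)^{d-k}$ and both sides are measures in $g$; the general case then follows by monotone approximation. With this reduction in hand, I would first expand the left-hand side: inserting the weighted Palm formula \eqref{eq:maxpolbirth} (for a function not depending on the internal structure $T$) and then Proposition \ref{lem:kdaxpolintersectSTITlang} (specialised to such a function, so that the auxiliary processes integrate out against $\PP_{\underline Y}^{\otimes(d-k+1)}$) turns $\r_{k,t}^{(j)}\int g\,\QQ^{(j)}_{(\overline{\bf P},{\boldsymbol\beta}),t}$ into a $(d-k)$-fold time integral whose integrand is $\varphi({\bf s})$ times the purely geometric quantity
\[
G[\tilde g](s_{d-k}):=\int\int\int\sum_{z\in y_{s_{d-k}}} V_j(z\cap\overline{\bf h})\,{\bf 1}\{c(z\cap\overline{\bf h})\in[0,1]^d\}\,\tilde g\big((z\cap\overline{\bf h})-c(z\cap\overline{\bf h})\big)\,\Lambda^{\otimes(d-k)}(\dint{\bf h})\,\PP_{Y_{s_{d-k}}}(\dint y_{s_{d-k}}),
\]
all multiplied by the ordering indicator ${\bf 1}\{0<s_1<\ldots<s_{d-k}<t\}$. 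The decisive structural point — and the reason conditional independence holds at all — is that $G[\tilde g]$ depends on the birth times only through the \emph{last} one $s_{d-k}$: the earlier times $s_1,\ldots,s_{d-k-1}$ enter the integrand solely through $\varphi$ and through the ordering constraint, because in the Mecke-type representation the maximal $k$-polytope $z\cap\overline{\bf h}$ is built from the tessellation $Y_{s_{d-k}}$ at the last birth time and from the hyperplanes ${\bf h}$, never from the earlier times.

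Next I would substitute the identification of $G[\tilde g]$ furnished by Corollary \ref{lem:maxpolbirth}, namely $G[\tilde g](s_{d-k})=\frac{(d-k-1)!\,(d-j)\,s_{d-k}^{k-j}\,\r_{k,t}^{(j)}}{2^{d-k-1}\,t^{d-j}}\int\tilde g(q)\,\QQ^{(j)}_{\overline{\bf P},t|\beta_{d-k}=s_{d-k}}(\dint q)$. After cancelling $\r_{k,t}^{(j)}$ and $2^{d-k-1}$, the remaining time weight $(d-j)(d-k-1)!\,s_{d-k}^{k-j}\,t^{-(d-j)}\,{\bf 1}\{0<s_1<\ldots<s_{d-k}<t\}$ is exactly the joint birth-time density of Theorem \ref{thm:GeneralBirthTime}, so that
\[
\int g\,\QQ^{(j)}_{(\overline{\bf P},{\boldsymbol\beta}),t}=\int\varphi({\bf s})\left[\int\tilde g(q)\,\QQ^{(j)}_{\overline{\bf P},t|\beta_{d-k}=s_{d-k}}(\dint q)\right]\QQ^{(j)}_{{\boldsymbol\beta},t}(\dint{\bf s}).
\]

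Finally I would disintegrate $\QQ^{(j)}_{{\boldsymbol\beta},t}$ along $\beta_{d-k}$. By Corollary \ref{cor:marginalconditional} this disintegration is precisely $\QQ^{(j)}_{{\boldsymbol\beta},t}(\dint{\bf s})=\QQ^{(j)}_{(\beta_1,\ldots,\beta_{d-k-1}),t|\beta_{d-k}=s_{d-k}}(\dint(s_1,\ldots,s_{d-k-1}))\,\QQ^{(j)}_{\beta_{d-k},t}(\dint s_{d-k})$, and since the bracketed $\tilde g$-integral depends only on $s_{d-k}$ it can be pulled inside the innermost integral; this yields exactly the triple integral on the right-hand side of the theorem for $g=\tilde g\,\varphi$, which establishes the asserted factorisation and hence the conditional independence. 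I expect the only delicate points to be the bookkeeping of the normalising constants (the factors $\r_{k,t}^{(j)}$, the factorials, the power $s_{d-k}^{k-j}$ and $t^{d-j}$, which must cancel cleanly) and the explicit verification that $G[\tilde g]$ is genuinely a function of $s_{d-k}$ alone; once these are in place the argument is a straightforward chaining of the preceding corollaries.
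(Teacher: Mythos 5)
Your proposal is correct and follows essentially the same route as the paper's proof: reduce to product test functions, expand the left-hand side via the weighted Palm formula \eqref{eq:maxpolbirth} and the Mecke-type representation of Proposition \ref{lem:kdaxpolintersectSTITlang}, observe that the geometric factor depends on the birth times only through $s_{d-k}$, and then identify the pieces via Corollary \ref{lem:maxpolbirth} and Corollary \ref{cor:marginalconditional}. The only cosmetic difference is that you route the time weight through the joint density of Theorem \ref{thm:GeneralBirthTime} and then disintegrate it, whereas the paper multiplies the conditional and marginal densities directly and verifies that the constants and powers of $s_{d-k}$ cancel; these are the same computation organised in a different order.
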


\subsection{The number of internal vertices on maximal segments}

Now we turn  to an application of the results, considering the maximal segments, i.e., the maximal 1-polytopes of the STIT tessellation $Y_t$ with a driving measure $\L$ as in \eqref{eq:decomplambda}. These segments may have internal vertices (that is, vertices that are located in the relative interior of a segment), which arise already at the time of birth of the segment (when $d\ge 3$) and thereafter subject to further subdivision of adjacent cells. In the planar case, a maximal segment is always born without internal vertices.  The following theorem provides the distribution of the number of internal points of the typical and the typical length weighted maximal segment, respectively. Formally, for $t>0$, $ j=0,1$ and $n=0,1,2,\ldots $ we define for the typical  $V_j$-weighted  maximal $1$-polytope with distribution  $\QQ^{(j)}_{(\overline{\bf P}, {\boldsymbol{ \beta}},\tau ),t}$

\begin{equation}\label{eq:kdmaxpolRefCampInt}
 \mathsf{p}_{1,j}(n) 
:=  \int  \int
{\bf 1}\{ \# T =n \}\,
  \QQ^{(j)}_{(\overline{\bf P}, {\boldsymbol{ \beta}},\tau ),t} (\dint (q, {\bf s},T)) , 
\end{equation}
where the variable $T$ stands for the 'internal structure' as defined in (\ref{eq:sqcap}).

\begin{theorem}\label{thm:GeneralInternalVertices} 
Let $d\geq 2$. For all $n\in\{0,1,2,\ldots\}$ the probabilities $\mathsf{p}_{1,0}(n)$ and $\mathsf{p}_{1,1}(n)$, respectively, are given by
$$
\mathsf{p}_{1,0}(n)=d(d-2)!\int_0^t\int_0^{s_{d-1}}\!\!\!\!\!\!\!\!\ldots\int_0^{s_2}{s_{d-1}^2\over t^d}{(d\cdot t-2s_{d-1}-s_{d-2}-\ldots-s_1)^n\over(d\cdot t-s_{d-1}-s_{d-2}-\ldots-s_1)^{n+1}}\,\dint s_1\ldots\dint s_{d-1}
$$
and
\begin{equation*}
\begin{split}
\mathsf{p}_{1,1}(n)=(n+1)(d-1)!\int_0^t\int_0^{s_{d-1}}\!\!\!\!\!\!\!\!\ldots\int_0^{s_2}{s_{d-1}^2\over t^{d-1}}&{(d\cdot t-2s_{d-1}-s_{d-2}-\ldots-s_1)^n\over(d\cdot t-s_{d-1}-s_{d-2}-\ldots-s_1)^{n+2}}\\ &\dint s_1\ldots\dint s_{d-1}\,.
\end{split}
\end{equation*}
\end{theorem}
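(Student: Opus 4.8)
The plan is to run Proposition~\ref{lem:kdaxpolintersectSTITlang} (equivalently the weighted Palm relation \eqref{eq:maxpolbirth}) with $k=1$ and then to evaluate the resulting integrals by invoking, twice, the well-known sectioning property of STIT tessellations (stability under one-dimensional sections). First I would rewrite $\mathsf{p}_{1,j}(n)$: choosing in \eqref{eq:maxpolbirth} the test function $g(q,{\bf s},T)={\bf 1}\{\#T=n\}$ and recalling the definition \eqref{eq:kdmaxpolRefCampInt}, one gets
\[
\r_{1,t}^{(j)}\,\mathsf{p}_{1,j}(n)=\int\sum_{({\bf p,s},1)\in m_t^{d-1}}V_j(\overline{\bf p})\,{\bf 1}\{c(\overline{\bf p})\in[0,1]^d\}\,{\bf 1}\{\#(m_t\sqcap\overline{\bf p})=n\}\,\PP_{M_t}(\dint m_t).
\]
Applying Proposition~\ref{lem:kdaxpolintersectSTITlang} with $k=1$ and $g(\overline{\bf p},{\bf s},T)=V_j(\overline{\bf p})\,{\bf 1}\{c(\overline{\bf p})\in[0,1]^d\}\,{\bf 1}\{\#T=n\}$ turns the right-hand side into a $(d-1)$-fold time integral over $\{0<s_1<\cdots<s_{d-1}<t\}$, over $Y_{s_{d-1}}$, over the hyperplane tuple ${\bf h}$, and over the $d$ independent trajectories $\underline y^{(1)},\dots,\underline y^{(d-2)},\underline y^{+},\underline y^{-}$, of the integrand $V_j(S)\,{\bf 1}\{c(S)\in[0,1]^d\}\,{\bf 1}\{\#T=n\}$, where I abbreviate $S:=z\cap\overline{\bf h}$ and $T:=S\cap\big[\bigcup_{i=1}^{d-2}\partial y^{(i)}_{t-s_i}\cup\partial y^{+}_{t-s_{d-1}}\cup\partial y^{-}_{t-s_{d-1}}\big]$ (for $d=2$ there are no processes $\underline y^{(i)}$).

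The second step integrates out the $d$ trajectories for fixed $S$ and ${\bf s}$. Here I would use that the intersection of a STIT tessellation evolved up to time $\tau$ with a fixed line is a one-dimensional STIT tessellation at time $\tau$, hence a stationary Poisson point process whose number of atoms inside the fixed segment $S$ is Poisson with mean $\tau\,\Lambda([S])$. As the $d$ trajectories are independent and almost surely produce distinct intersection points with the line carrying $S$, the count $\#T$ is a superposition of $d$ independent Poisson variables, thus Poisson with parameter
\[
\Lambda([S])\Big[\sum_{i=1}^{d-2}(t-s_i)+2\,(t-s_{d-1})\Big]=\Lambda([S])\,A,\qquad A:=d\,t-2s_{d-1}-s_{d-2}-\cdots-s_1 .
\]
Integrating ${\bf 1}\{\#T=n\}$ against $\PP_{\underline Y}^{\otimes d}$ then replaces it by $e^{-\Lambda([S])A}(\Lambda([S])A)^n/n!$.

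For the third step it remains to integrate $V_j(S)\,{\bf 1}\{c(S)\in[0,1]^d\}\,e^{-\Lambda([S])A}(\Lambda([S])A)^n/n!$ over $Y_{s_{d-1}}$ and ${\bf h}$. Applying the sectioning property once more, the chords $z\cap\overline{\bf h}$, $z\in Y_{s_{d-1}}$, are exactly the cells of the one-dimensional section of $Y_{s_{d-1}}$ along $\overline{\bf h}$, so that — independently of the direction of $\overline{\bf h}$ — the $\Lambda$-length $\lambda=\Lambda([S])$ of the selected (typical, resp.\ $V_1$-length-biased) chord obeys a Gamma law $\mu_{j,s_{d-1}}$ of shape $1+j$ and rate $s_{d-1}$. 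Consequently the inner integral equals $I_j^{(1)}(s_{d-1})\cdot\EE_{\mu_{j,s_{d-1}}}[\,e^{-\lambda A}(\lambda A)^n/n!\,]$, where $I_j^{(1)}(s)$ is the inner bracket of representation \eqref{eq:rhoformel}, and the Gamma mixing gives
\[
\EE_{\mu_{j,s_{d-1}}}\!\Big[e^{-\lambda A}\tfrac{(\lambda A)^n}{n!}\Big]=\binom{n+j}{j}\,\frac{s_{d-1}^{\,1+j}A^n}{B^{\,n+1+j}},\qquad B:=A+s_{d-1}=d\,t-s_{d-1}-\cdots-s_1 .
\]
Differentiating \eqref{eq:rhoformel} in $t$ and using the scaling identity \eqref{eq:ScalingRho} forces $I_j^{(1)}(s)=(d-j)(d-2)!\,\r_{1,1}^{(j)}\,2^{-(d-2)}\,s^{1-j}$, so the two powers of $s_{d-1}$ combine to $s_{d-1}^{2}$ and all direction-dependent normalisations cancel against $\r_{1,t}^{(j)}=t^{d-j}\r_{1,1}^{(j)}$. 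Collecting the prefactor $2^{d-2}$ yields the unified expression
\[
\mathsf{p}_{1,j}(n)=\binom{n+j}{j}\,\frac{(d-j)(d-2)!}{t^{d-j}}\int_0^t\!\!\int_0^{s_{d-1}}\!\!\!\!\cdots\int_0^{s_2}\frac{s_{d-1}^{2}\,A^n}{B^{\,n+1+j}}\,\dint s_1\cdots\dint s_{d-1},
\]
which specialises to the two stated formulas for $j=0$ and $j=1$.

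The hard part will be the constant and weighting bookkeeping of this last step: one must track the length-biasing produced by $V_j$, the factor relating Euclidean and $\Lambda$-length along $\overline{\bf h}$, and the integral-geometric (Blaschke--Petkantschin type) Jacobian hidden in $\Lambda^{\otimes(d-1)}(\dint{\bf h})$. I would sidestep an explicit evaluation of these by always comparing with the instance in which the Poisson factor is replaced by $1$, i.e.\ with \eqref{eq:rhoformel}, so that the direction integrals cancel in the ratio and only the clean powers of $s_{d-1}$ survive; the identities $\sum_{n\ge0}A^n/B^{n+1}=1/s_{d-1}$ and $\sum_{n\ge0}(n+1)A^n/B^{n+2}=1/s_{d-1}^{2}$ confirm $\sum_n\mathsf{p}_{1,j}(n)=1$ as a check. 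Conceptually everything rests on the two uses of the sectioning property, which simultaneously furnish the Poisson law of the internal-vertex count (the numerator $A^n$) and the exponential/Gamma law of the chord length (the denominator $B^{\,n+1+j}$).
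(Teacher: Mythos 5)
Your proposal is correct and takes essentially the same approach as the paper's proof: Proposition~\ref{lem:kdaxpolintersectSTITlang} with $k=1$ combined with \eqref{eq:maxpolbirth}, the Poisson law of the internal-vertex count with parameter $\Lambda([z\cap\overline{\bf h}])\,a({\bf s})$ obtained from the sectioning property, and the exponential (resp.\ length-biased Gamma) chord-length law along $\overline{\bf h}$, with all direction-dependent factors cancelling against $\r_{1,t}^{(j)}$. The only distinction is bookkeeping: the paper evaluates the inner integrals (its $I$ and $I_\rho$) and the intensity $\r_{1,t}^{(j)}$ explicitly via the refined Campbell theorem for the chord process on $\overline{\bf h}$ and lets the direction integrals cancel at the end, whereas you factor out the direction-free Gamma expectation per line and identify the residual direction integral with $\r_{1,1}^{(j)}$ via the scaling relation \eqref{eq:ScalingRho} applied to \eqref{eq:rhoformel} --- the same cancellation, organised as a ratio.
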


Theorem \ref{thm:GeneralInternalVertices} implies in particular that the probabilities $\mathsf{p}_{1,0}(n)$ and $\mathsf{p}_{1,1}(n)$  are independent of the driving measure $\L$. Moreover, the substitution $u_i:=ts_i$, $i\in\{1,\ldots,d-1\}$ shows that these probabilities are also independent of the time parameter $t$. This is consistent with the scaling property (\ref{eq:STITglobalScaling}) of a STIT tessellation, because the number of internal vertices on a maximal 1-polytope does not change when the tessellation is rescaled. Theorem \ref{thm:GeneralInternalVertices} can also be used to compute the moments of the respective distributions. The following identities are readily checked by using Theorem \ref{thm:GeneralInternalVertices}.

\begin{corollary}
Let $d\geq 2$ and $N_d^{(j)}, \ j=0,1,$ be random variables with distributions given by $\mathsf{p}_{1,j}$. Then
\begin{align*}
\EE N_d^{(0)}={1\over 2}{d^2-d+2\over d-1}\qquad\text{and}\qquad \EE N_d^{(1)}=\begin{cases}+\infty & \mbox{ if } d=2\\ {\displaystyle {d^2-2d+4\over d-2}} & \mbox{ if } d\geq 3\,.
\end{cases}
\end{align*}
\end{corollary}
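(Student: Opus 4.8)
The plan is to compute $\EE N_d^{(j)}=\sum_{n\ge 0} n\,\mathsf{p}_{1,j}(n)$ directly from the explicit densities in Theorem~\ref{thm:GeneralInternalVertices}, summing the resulting power series under the integral sign and then evaluating the remaining integrals over the ordered simplex. Throughout I abbreviate the two combinations appearing in the integrands by $B:=d\cdot t-s_{d-1}-\cdots-s_1$ and $A:=d\cdot t-2s_{d-1}-\cdots-s_1$, so that $B-A=s_{d-1}$ and hence $A/B=1-s_{d-1}/B\in(0,1)$ on the interior of the domain $0<s_1<\cdots<s_{d-1}<t$. Since every summand is non-negative, Tonelli's theorem justifies interchanging the summation over $n$ with the $(d-1)$-fold integration.

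For $j=0$, after moving the sum inside one is left with $\tfrac1B\sum_{n\ge 0} n\,(A/B)^n=\tfrac1B\cdot\tfrac{A/B}{(1-A/B)^2}=A/s_{d-1}^2$, using $1-A/B=s_{d-1}/B$. The factor $s_{d-1}^2$ in the numerator of $\mathsf{p}_{1,0}$ cancels this denominator exactly, leaving $\EE N_d^{(0)}=\tfrac{d(d-2)!}{t^d}\int A\,\dint s_1\cdots\dint s_{d-1}$ over the ordered simplex. For $j=1$ the analogous series $\sum_{n\ge 0} n(n+1)(A/B)^n=\tfrac{2(A/B)}{(1-A/B)^3}=2AB^2/s_{d-1}^3$ combines with the prefactor $\tfrac{s_{d-1}^2}{t^{d-1}B^2}$ to give $2A/(t^{d-1}s_{d-1})$, whence $\EE N_d^{(1)}=\tfrac{2(d-1)!}{t^{d-1}}\int \tfrac{A}{s_{d-1}}\,\dint s_1\cdots\dint s_{d-1}$.

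It then remains to evaluate two elementary integrals over the ordered simplex $\Delta_t:=\{0<s_1<\cdots<s_{d-1}<t\}$. The building blocks are the volume $\ell_{d-1}(\Delta_t)=t^{d-1}/(d-1)!$, the integral of the largest coordinate $\int_{\Delta_t}s_{d-1}=t^d/(d\,(d-2)!)$ (obtained by fixing $s_{d-1}=u$ and integrating the remaining ordered $(d-2)$-simplex of volume $u^{d-2}/(d-2)!$), and the integral of the coordinate sum $\int_{\Delta_t}(s_1+\cdots+s_{d-1})=(d-1)\,t^d/(2\,(d-1)!)$ (by the symmetry of $[0,t]^{d-1}$ together with the factor $1/(d-1)!$ for the ordering). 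Substituting these into $\int_{\Delta_t}A$ and collecting terms yields $\EE N_d^{(0)}=\tfrac{d^2}{d-1}-1-\tfrac{d}{2}=\tfrac12\cdot\tfrac{d^2-d+2}{d-1}$, as claimed.

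For $\EE N_d^{(1)}$ the same bookkeeping is carried out, but now with the weight $1/s_{d-1}$: fixing $s_{d-1}=u$ and integrating the inner ordered $(d-2)$-simplex produces a factor $u^{d-3}$, one power being lost to the $1/u$, so the outer integral $\int_0^t u^{d-3}[\,\cdots]\,\dint u$ must be examined. The main obstacle lies exactly here. For $d=2$ one has $u^{d-3}=u^{-1}$, the integral $\int_0^t u^{-1}\,\dint u$ diverges, and one reads off $\EE N_2^{(1)}=+\infty$; for $d\ge 3$ the exponent $d-3\ge 0$ makes the integral finite, and the analogous algebraic simplification gives $\EE N_d^{(1)}=\tfrac{2d(d-1)}{d-2}-(d+2)=\tfrac{d^2-2d+4}{d-2}$. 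Thus the delicate point is the separate treatment of the maximal birth time $s_{d-1}$ in the simplex integrals, which is precisely what distinguishes the convergent from the divergent regime, together with the (routine) justification of the term-by-term summation; the power-series identities and the remaining arithmetic are straightforward.
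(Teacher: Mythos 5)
Your proposal is correct and follows exactly the route the paper intends: the paper gives no explicit proof of this corollary, stating only that the identities are ``readily checked by using Theorem \ref{thm:GeneralInternalVertices}'', and your computation (Tonelli to interchange $\sum_n n(\cdot)$ with the simplex integral, the geometric-series identities collapsing the integrands to $A/s_{d-1}^2$ resp.\ $2A/(t^{d-1}s_{d-1})$, and the elementary ordered-simplex integrals, with the $u^{d-3}$ exponent producing the divergence at $u\to 0$ when $d=2$) is precisely that verification, carried out correctly.
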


Note that $\EE N_2^{(0)}=\EE N_3^{(0)}$, while $\EE N_d^{(0)}$ is strictly increasing for all $d\geq 3$. In contrast, for the mean number of internal vertices on the typical length-weighted maximal segment we have that $\EE N_3^{(1)}=7$, $\EE N_4^{(1)}=6$, $\EE N_5^{(1)}=6{1\over 3}$ and $\EE N_6^{(1)}=7$ and, considered as a function of $d$, $\EE N_d^{(1)}$ is strictly increasing for $d\geq 5$.

In the planar case $d=2$, as mentioned above, the probabilities $\mathsf{p}_{1,0}(n)$ are known from \cite{m/n/w11,thaele10}, whereas for $d=3$ the formula for $\mathsf{p}_{1,0}(n)$ has been established in \cite{TWN12} by different methods. Our approach in the present paper is more general and allows to deduce the corresponding formula also for the length-weighted maximal segment  as well as to deal with arbitrary space dimensions. As a concrete example, take $d=3$ and consider the length-weighted typical maximal segment. Here, we have
\begin{eqnarray*}
\mathsf{p}_{1,1}(0) &=& 5+18\ln 2-{63\over 4}\ln 3\approx 0.173506\,,\\
\mathsf{p}_{1,1}(1) &=& 28+90\ln 2-{657\over 8}\ln 3\approx 0.159712\,,\qquad {\rm etc.}
\end{eqnarray*}
The mean number of internal vertices is $7$ in this case. The values $\mathsf{p}_{1,1}(n)$ may be determined from the formula in Theorem \ref{thm:GeneralInternalVertices} by straightforward integration.

\section{Proofs} \label{sec:proof}

\subsection{Essential ingredients}

\paragraph{\bf Iteration of tessellations}
The acronym STIT stands for the \underline{st}abi\-lity of distribution under the operation of \underline{it}eration (or nesting) of tessellations. In order to define this operation formally, let $y=\{ z_i:i\in\NN\}$ be a tessellation of $\RR^d$ and
 ${\vec y}=(y^{(i)})_{i\in\NN}$ be  a sequence of tessellations. Then the tessellation $y\boxplus {\vec y}$, referred to as the iteration of $y$ and ${\vec y}$, is specified  by 
\begin{eqnarray}\label{defiteration}
y\boxplus {\vec y} := \{y^{(i)}\wedge z_i : i\in\NN\}\,.
\end{eqnarray}
Thus, for each $i\in\NN$ we restrict the tessellation ${y}^{(i)}$ to the cell $z_i\in y$. This yields a local tessellation of $z_i$ and the union of all these local tessellations clearly forms a tessellation of $\RR^d$. We notice that $\boxplus$ defines a measurable operation from $\cT\times\cT^\NN$ to $\cT$.

Now let $\underline Y=(Y_t)_{t>0}$ be a STIT tessellation process driven by some hyperplane measure $\L$ as in \eqref{eq:decomplambda}, and let
${\vec {\underline Y}}=({\underline Y}^{(i)})_{i\in\NN}$ be a sequence of i.i.d.\ copies of $\underline Y$. For fixed $s>0$, we write ${\vec Y}_s=({Y_s}^{(i)})_{i\in\NN}$. Then 
\begin{equation}\label{iterate}
Y_{t} \stackrel{D}{=} Y_s\boxplus {\vec Y}_{t-s}\qquad\text{for\ all}\qquad 0<s<t\,,
\end{equation}
cf.\ \cite[Lemma 2]{nagel/weiss05}. This implies in particular that $Y_{2t}\stackrel{D}{=} Y_t\boxplus {\vec Y}_t$ for all $t>0$. 
The STIT property means that
\begin{equation*}
\label{stit}
Y_{t} \stackrel{D}{=} 2(Y_t\boxplus {\vec Y}_t)\qquad\text{for\ all}\qquad t>0\,,
\end{equation*}
where multiplication of a tessellation $y\in\cT$ with a factor $2$ stands for the transformation $2y=\{2z:z\in y\}$ and $2z=\{2x:x\in z\}$.

\paragraph{\bf STIT scaling}
The dilation $tY_t$ of $Y_t$ by factor $t$ has the same distribution as $Y_1$, the STIT tessellation with time parameter $1$, that is,
\begin{equation}\label{eq:STITglobalScaling}
 tY_t\overset{D}{=}Y_1\qquad{\rm for\ all}\qquad t>0\,,
\end{equation}
see Lemma 5 in \cite{nagel/weiss05}.

\paragraph{\bf STIT intersections}
The intersection of the STIT tessellation $Y(t)$ with a line $L={\rm span}\,u$, where $u\in\mathcal{S}_+^{d-1}$ (upper unit half-sphere) is a Poisson point process with intensity $t\L([u])$ (here $u$ has to be interpreted as the line segment connecting the origin with $u$), cf.~\cite{nagel/weiss05}.

\subsection{A global construction}\label{globalconstr}

The main technical device in the proof of Theorem \ref{theor:slivnyakgeneratorSTIT} is a \textit{global} construction developed in \cite{m/n/w08a, m/n/w08b,m/n/w11} for the STIT tessellation process with driving measure $\L$. Here we summarize the essential ingredients that are needed for our later purposes.

We start with a Poisson point process $\Pi$ on the measurable space $[\cH \times (0,\infty),\mathfrak{B}(\cH\times(0,\infty))]$ with the intensity measure  $\Lambda \otimes \ell_+$. Now, we define the random process $(\tilde \Pi_t)_{t>0}$ of marked Poisson hyperplane processes,  putting
$$
\tilde \Pi_t :=\{ (h,s)\in \Pi :\, s\leq t \}\,,\qquad t>0\,.
$$
For $(h,s)\in  \Pi$ we interpret $s$ as the birth time of the hyperplane $h$ and write $\beta (h)=s$.

Our assumption on the measure $\L$ ensures that for all $t>0$, the Poisson hyperplane process $\Pi_t=\{h\in\cH:(h,s)\in\tilde\Pi_t\}$ a.s. (almost surely) induces a tessellation of $\RR^d$. We denote this Poisson hyperplane tessellation by $X_t$, and by $\underline{X}=(X_t)_{t>0}$ the corresponding random process. For any $t>0$ there is an a.s. uniquely determined random cell $Z_t^0$ of the Poisson hyperplane tessellation $X_t$ that contains the origin. The random process on $\cP_d$ of these zero cells is denoted by $(Z_t^0)_{t>0}$. Clearly, this process is a pure jump process. Let $(\eta_k)_{k\in\ZZ}$ be the sequence of its jump times with the convention that $\eta_1<1\leq \eta_2$. In \cite[Lemma 4.1]{m/n/w08a} it was shown that
$$
\bigcup_{k\in \ZZ} Z_{\eta_k}^0 =\RR^d\,.
$$
At each jump time $\eta_k$ a cell $\hat Z_k$ is chopped off from the current zero cell. The basic idea is to start immediately within each of these new cells $\hat Z_k$ a local STIT tessellation process as described in Section \ref{sect:constrbound} (with the window $W$ replaced by $\hat Z_k$). This can formally be described as follows.

Let $\Sigma^*$ be a Poisson point process on the measurable space $$\left[ \RR^d\times(-\infty, 0)\times\cN(\cH\times(0,\infty)),\mathfrak{B}(\RR^d\times(- \infty , 0)\times\cN(\cH\times(0,\infty)))\right]$$ 
with intensity measure $\ell_d\times\ell_-\times\PP_\Pi$, where $\PP_\Pi$ is the distribution of the process $\Pi$  defined above. Further, define $\Sigma := \Sigma^* +\delta_{(0,0,\Pi)}$, where we suppose that the point processes $\Sigma^*$ and $\Pi$ are independent. We interpret the points of $\Sigma$ as a collection of random points in $\RR^d$ that are marked with priorities in $(-\infty,0)$ and a birth time marked hyperplane process from $\cN(\cH\times(0,\infty))$, the space of locally finite counting measures on $\cH\times(0,\infty)$.

The points from $\RR^d \times (-\infty ,0]$ are designed to select a hyperplane process which is then used for the division of an extant cell. Namely, given a cell $z\in\cP_d$ we choose the point $(X(z), R(z), \Psi (z)) \in \Sigma$ such that $X(z)\in z$ and $R(z)= \max \{ r\in (-\infty ,0]: (x,r,\psi)\in \Sigma , x\in z \}$.

 In other words, $X(z)$ is the a.s. uniquely determined point in $z$ with the highest priority. Note that after the first division of $z$ this point remains a.s. the same for one of the two daughter cells, while for the other daughter cell a new point is selected. It is clear that if $z$ is the zero cell we always have that $(X(z), R(z), \Psi (z))= (0,0 , \Pi)$. Now, if a cell $z$ is born at time $\beta(z)$ by division of its mother cell or by separating from the current zero cell, and if $(X(z), R(z), \Psi (z))$ is chosen as  described, then the marked hyperplane $(h,s )\in \Psi (z)$ is used to divide $z$ further if and only if $h\in [z]$ and $s  =\min \{ s'>\beta (z) :\, (h',s') \in \Psi (z),\, h'\in [z] \}$. This further division then leads to the birth time marked  maximal $(d-1)$-polytope $(z\cap h , s)$, i.e. $\beta (z\cap h )=s$, and the two new daughter cells have the birth time $s$ as well. 

The construction we have described defines a random process  on the space $\cT$ of tessellations. In fact, it has been shown in \cite{m/n/w08a,m/n/w08b,m/n/w11} that, restricted to a polytope $W$, this process coincides with the local STIT tessellation process in $W$ driven by the hyperplane measure $\L$. As explained in Section \ref{sec:Consistency}, the distribution of this process must then coincide with that one of the global STIT tessellation process $\underline{Y}=(Y_t)_{t>0}$ defined by means of consistency and the Kolmogorov extension theorem. The construction here is an explicit global construction based on the Poisson point process $\Sigma$, and it is the key device in the proof of Theorem \ref{theor:slivnyakgeneratorSTIT}.

\subsection{Proof of Theorem \ref{theor:slivnyakgeneratorSTIT} }

We are now going to give a proof of Theorem \ref{theor:slivnyakgeneratorSTIT}, which makes use of the  global construction outlined in the previous section. We use the same notation as there. Moreover, for a realization $\sigma $ of the Poisson point process $\Sigma$,  let $ m(\sigma )$ be the uniquely determined  realization of the point process of birth time marked maximal $(d-1)$-polytopes. Correspondingly, $\underline{y}( \sigma )$ denotes the realization of the STIT process determined by $\sigma $, and  $y(\sigma)_s$ its state at time $s$. 
By $z \in \underline{y}( \sigma )$ we mean any cell which is extant in some time interval (i.e. between its birth and its division) in the realization $\underline{y}( \sigma )$. Further, for given $(x,r,\psi )\in \sigma$ and $(h,s)\in \psi$, there can be either no or exactly one cell $z \in \underline{y}( \sigma )$ such that $z$ is divided by $h$ at time $s$. We describe this by writing
\begin{align*}
\sum_{z \in \underline{y}( \sigma )}\,
& {\bf 1}\{ x\in z \} \,
{\bf 1}\{ r= \max \{ r'\in (-\infty ,0]: (x',r',\psi' )\in \sigma , x'\in z  \} \}  \\
&\qquad\cdot{\bf 1}\{ h\in [z] \} \, {\bf 1}\{ s  =\min \{ s'>\beta (z) :\, (h',s') \in \psi ,\, h'\in [z] \} \} \,.
\end{align*}
We will use this rather extensive form in the following proof  when we apply the Mecke formula for Poisson point processes.

For a better readability we introduce the following abbreviatory notation for a cell $z \in \underline{y}( \sigma )$ and for a given hyperplane $h$ and a time $s$: 
\begin{eqnarray*}
{\bf 1}(z,\sigma^*,\psi) &=& {\bf 1}\{ x\in z, 0 \not \in z \} \,
{\bf 1}\{ r= \max \{ r'\in (-\infty ,0): (x',r',\psi' )\in \sigma^* , x'\in z  \} \} \cdot \\
&&\cdot{\bf 1}\{ h\in [z] \} \, {\bf 1}\{ s  =\min \{ s'>\beta (z) :\, (h',s') \in \psi ,\, h'\in [z] \} \} ,\\ 
&& {\rm for} \ (x,r,\psi )\in \sigma^* , \\[2mm]
{\bf 1}(z,0,\pi) &=& {\bf 1}\{ 0 \in z \} \,
{\bf 1}\{ h\in [z] \} \, {\bf 1}\{ s  =\min \{ s'>\beta (z) :\, (h',s') \in \pi \}\} \,.
\end{eqnarray*}
The two notations distinguish whether $z$  is a zero-cell or not. Both terms are equal to 1 if $z$ is divided by $h$ at time $s$. 

At first, because $m(\sigma)$ and $\underline{y}(\sigma)$ are uniquely determined by $\sigma$,  the transformation formula for image measures implies that
\begin{align*}
A &:=\int \sum_{(p,s)\in m} g( m \wedge z(p,s)  ,z(p,s) , p, s)\,\PP_{M} (\dint  m)\\
& =\int \sum_{(p,s)\in m(\sigma )} g(m( \sigma)\wedge z(p,s), z(p,s) , p, s)\,\PP_{\Sigma } (\dint \sigma )\,.
\end{align*}
A maximal polytope of dimension $d-1$ appears once a cell gets divided. Using the  rules from the global construction of the STIT tessellation, the definition of the process  $\Sigma$ as a sum of $\Sigma^*$ and $\delta_{(0,0,\Pi)}$ and the abbreviatory notation given above this leads to
\begin{eqnarray*}
A &=& \int \int \sum_{z \in \underline{y}( \sigma^* + \delta_{(0,0,\pi )})} \left(\sum_{(x,r,\psi )\in \sigma^*} \ \sum_{(h,s)\in \psi} {\bf 1}(z,\sigma^*,\psi) +\right. \\
&& + \left. \sum_{(h,s) \in \pi} {\bf 1}(z,0,\pi)   \right) g\big( m( \sigma^*+ \delta_{(0,0,\pi )})\wedge z ,z , z\cap h , s\big) \PP_\Pi(\dint \pi )\PP_{\Sigma^*} (\dint \sigma^* )
\end{eqnarray*}
Applying the  Mecke formula \eqref{eq:SlivnyakMecke} to the Poisson point processes $\Sigma^*$ yields
\begin{eqnarray*}
A &=& \int \int \int \int \int \sum_{z \in \underline{y}( \sigma^* + \delta_{(x,r,\psi )}+ \delta_{(0,0,\pi )})} \left( \sum_{(h,s)\in \psi} {\bf 1}(z,\sigma^*+ \delta_{(x,r,\psi )},\psi) +\right. \\
&& + \left. \sum_{(h,s) \in \pi} {\bf 1}(z,0,\pi)   \right) g\big( m( \sigma^*+ \delta_{(x,r,\psi )}+ \delta_{(0,0,\pi )})\wedge z ,z , z\cap h , s\big)\\
&& \PP_\Pi(\dint \psi )\ell_d(\dint x) \ell_-(\dint r)\PP_\Pi(\dint \pi )\PP_{\Sigma^*} (\dint \sigma^* ) \,.
\end{eqnarray*}
Next, we apply the Mecke formula \eqref{eq:SlivnyakMecke} again, this time twice to the Poisson point process $\Pi$, which has intensity measure $\L\otimes\ell_+$. This leads to the equation
\begin{align}\label{eq:Zwischenschritt1_Neu}
A = & \int\int\int\int\int [A_1+A_2]\,\PP_\Pi(\dint \psi ) \ell_d(\dint x) \ell_-(\dint r) \PP_\Pi(\dint \pi ) \PP_{\Sigma^*} (\dint \sigma^* )
\end{align}
with the terms $A_1$ and $A_2$ given by
\begin{align*}
A_1:=\int\int & \sum_{z \in  \underline{y}(\sigma^* + \delta_{(x,r,\psi+\delta_{(h,s)})} +\delta_{(0,0,\pi)} )} {\bf 1}(z,\sigma^* + \delta_{(x,r,\psi+\delta_{(h,s)})} ,\psi+\delta_{(h,s)})   \\ &   
\cdot g\big(m( \sigma^* + \delta_{(x,r,\psi +\delta_{(h,s)})} +\delta_{(0,0,\pi)} )\wedge z ,z , z\cap h , s\big)\, \Lambda (\dint h) \,  \dint s
\end{align*}
and
\begin{align*}
A_2:= \int \int &
 \sum_{z \in  \underline{y}(\sigma^* + \delta_{(x,r,\psi)} +\delta_{(0,0,\pi +\delta_{(h,s)})} )   } {\bf 1}(z,0,\pi +\delta_{(h,s)})\\ &   
\cdot g\big(m( \sigma^* + \delta_{(x,r,\psi )} +\delta_{(0,0,\pi +\delta_{(h,s)})} )\wedge z ,z , z\cap h , s\big)\, 
\Lambda (\dint h)  \dint s\,.
\end{align*}

Now,  notice that the $(x,r)$-value of $(x,r,\psi +\delta_{(h,s)} )$ is the same as that of $(x,r,\psi)$. Furthermore,
 $z \in  \underline{y}(\sigma^* + \delta_{(x,r,\psi+\delta_{(h,s)})} +\delta_{(0,0,\pi)} )  $ and the value of the indicator ${\bf 1}(z,\sigma^* + \delta_{(x,r,\psi+\delta_{(h,s)})} ,\psi+\delta_{(h,s)})$  in $A_1$ is 1, if and only if  $z \in  {y}(\sigma^* + \delta_{(x,r,\psi)} +\delta_{(0,0,\pi)} )_s $ and $z$ is divided by $h$ at time $s$. 

If  $s>0$, $z\in  y (\sigma )_s$ and $h\in [z]$, then $ \underline{y}( \sigma ,\oslash_{s,z,h}) $
denotes the realization of the STIT tessellation process which until time $s$ coincides with
$  \underline{y}( \sigma) $, at time $s$ the cell $z$ is divided by $h$, and after time $s$ the global construction is continued based on $\sigma $. Note that the division of $z$ by $h$ has an impact on the construction after time $s$. With this notation, it follows that
\begin{align*}
A_1 = & \int\int \sum_{z \in   y(\sigma^* + \delta_{(x,r,\psi )} +\delta_{(0,0,\pi)})_s }{\bf 1}(z,\sigma^* + \delta_{(x,r,\psi)} ,\psi) \\
&\qquad  \cdot g\big(m((\underline{y}(\sigma^*+ \delta_{(x,r,\psi )}+\delta_{(0,0,\pi)},\oslash_{s,z,h}))) \wedge z ,z , z\cap h , s\big) \, \Lambda (\dint h)   \dint s   
\end{align*}
and
\begin{align*}
A_2= \int \int &
 \sum_{z \in  {y}(\sigma^* + \delta_{(x,r,\psi)} +\delta_{(0,0,\pi )} )_s   }{\bf 1}(z,0,\pi ) \\ 
&   
\cdot g\big(m( (\underline{y}(\sigma^*+ \delta_{(x,r,\psi )}+\delta_{(0,0,\pi)},\oslash_{s,z,h})) )\wedge z ,z , z\cap h , s\big)\, 
\Lambda (\dint h)  \dint s\,.
\end{align*}
Plugging this into \eqref{eq:Zwischenschritt1_Neu} and applying then backwards the Mecke formula \eqref{eq:SlivnyakMecke} to the Poisson point processes $\Sigma^*$ (not to $\Pi$), we conclude that 
\begin{align*}
A=&\int\int\bigg[\int\int \bigg( \sum_{z \in  y(\sigma^* + \delta_{(0,0,\pi)})_s   }\quad \sum_{(x,r,\psi )\in  \sigma^*} {\bf 1}(z,\sigma^*  ,\psi) + {\bf 1}(z,0,\pi )\bigg)\\
& \quad \cdot g\big(m((\underline{y}(\sigma^*+ \delta_{(0,0,\pi)},\oslash_{s,z,h}))) \wedge z ,z , z\cap h , s) \,\Lambda (\dint h)  \dint s\bigg]\PP_{\Pi}(\dint\pi)\PP_{\Sigma^*}(\dint\sigma^*)\,.
\end{align*}
Note that in this expression for a fixed cell $z$ and a hyperplane $h$ holds $$\sum_{(x,r,\psi )\in  \sigma^*} {\bf 1}(z,\sigma^*  ,\psi) + {\bf 1}(z,0,\pi )={\bf 1}\{ h\in[z]\}.$$

 Now we use once more the transformation theorem for image measures and the fact that $m$ and $\underline{y}$ are uniquely determined by $\pi$ and $\sigma^*$. Moreover,  notice that the cell $z$ is divided for the first time at  $s$ using the hyperplane $h$ into two daughter cells and that within these two daughter cells two independent STIT tessellation processes are realized. This yields  
\begin{align*}
&A=\int\int\int\int\int   \sum_{z\in  y_s} g\big((z\cap h)\cup (  m^{(1)}_{(+s)} \wedge (z\cap h^+)) \cup (  m^{(2)}_{(+s)}  \wedge (z\cap h^-) ) ,    z,z\cap h, s \big) \\
&\hspace{5cm} \cdot{\bf 1}\{h\in[z]\}\, \Lambda (\dint h) \PP_{M} (\dint  m^{(1)})\PP_{M} (\dint  m^{(2)})\PP_{Y_s}(\dint  {y_s})\dint s \,. 
\end{align*}
Together with the definition \eqref{eq:problambda} of the probability measure $\L_z$ this finally leads to the  identity
\begin{align*}
A &=\int \int   \sum_{z\in  y_s}  \int \Big[\int \int 
g\big( (z\cap h)\cup (m^{(1)}_{(+s)} \wedge (z\cap h^+)) \cup (  m^{(2)}_{(+s)} \wedge (z\cap h^-) ),    z,z\cap h, s\big)\\
&\hspace{5cm} \PP_M(\dint m^{(1)}) \PP_M(\dint  m^{(2)})\Big]  \Lambda_z (\dint h)\Lambda ([z])  \PP_{Y_s}(\dint y_s)\, \dint s
\end{align*}
and the proof of the theorem is complete.\hfill $\Box$

\subsection{Proof of Proposition \ref{lem:kdaxpolintersectSTITlang}}

The purpose of the present subsection is to prove Proposition \ref{lem:kdaxpolintersectSTITlang}. This is prepared by the following technical lemma.
Let $\cF_{d-1}(z)$ denote the set of all facets (that is, faces of dimension $d-1$) of a polytope $z\in\cP_d$.

\begin{lemma}\label{lem:cellinclusion}
For all non-negative measurable functions $\tilde g : \cP _{d-1}\to \RR$ and $0<s_1<s_2$, we have that
\begin{eqnarray*}
&& \int \sum_{z_1\in y_{s_1}} \int \int \sum_{z_2\in  y_{s_2-s_1} \wedge (z_1\cap h_1^+)} 
\tilde g (z_1\cap h_1 \cap z_2) {\bf 1} \{ (z_2\cap h_1) \in \cF_{d-1}(z_2) \}  \\
&& \\
&& \qquad\qquad\qquad\qquad\qquad\qquad\cdot {\bf 1}\{h_1\in [z_1]\}\,\PP_{Y_{s_2-s_1}}(\dint y_{s_2-s_1}) \Lambda (\dint h_1)
\PP_{Y_{s_1}}(\dint y_{s_1}) \\
&& \\
&=& \int \sum_{z\in y_{s_2}} \int 
\tilde g (  z \cap h_1 ) {\bf 1}\{h_1\in [z]\} 
 \Lambda (\dint h_1)
\PP_{Y_{s_2}}(\dint y_{s_2})\,. 
\end{eqnarray*}
\end{lemma}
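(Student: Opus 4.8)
The plan is to interpret both sides geometrically and reduce the statement to the consistency/iteration property \eqref{iterate} of STIT tessellations, $Y_{s_2}\stackrel{D}{=}Y_{s_1}\boxplus\vec Y_{s_2-s_1}$. The right-hand side integrates $\tilde g(z\cap h_1)$ over all cells $z$ of $Y_{s_2}$ and all hyperplanes $h_1$ hitting $z$ (with respect to $\L$). On the left-hand side, the cell $z_2$ of $Y_{s_2-s_1}\wedge(z_1\cap h_1^+)$ together with the indicator $\mathbf 1\{(z_2\cap h_1)\in\cF_{d-1}(z_2)\}$ is designed to pick out exactly those cells of the iterated tessellation that have the hyperplane $h_1$ as (part of) a facet lying on the boundary of the half-space $h_1^+$. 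First I would make precise the bookkeeping: under the iteration $Y_{s_2}\stackrel{D}{=}Y_{s_1}\boxplus\vec Y_{s_2-s_1}$, every cell $z$ of $Y_{s_2}$ arises as $z=z_2\cap z_1$ for a unique $z_1\in Y_{s_1}$ and a unique cell $z_2$ of the independent copy $Y^{(i)}_{s_2-s_1}$ restricted to $z_1$, and $z\cap h_1$ depends only on these data.

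The key geometric identity to establish is that, for fixed $h_1$, summing $\tilde g(z\cap h_1)$ over all cells $z\in Y_{s_2}$ with $h_1\in[z]$ equals summing over $z_1\in Y_{s_1}$ with $h_1\in[z_1]$ and then over those cells $z_2$ of $Y_{s_2-s_1}\wedge(z_1\cap h_1^+)$ whose intersection with $h_1$ is a genuine facet. The point is that a hyperplane $h_1$ cutting through a cell $z\in Y_{s_2}$ must also cut the surrounding $Y_{s_1}$-cell $z_1\supset z$, so $h_1\in[z_1]$; and within $z_1$, the trace $z\cap h_1$ is controlled by the finer tessellation $Y_{s_2-s_1}\wedge(z_1\cap h_1^+)$ restricted to the side $h_1^+$. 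The facet indicator $\mathbf 1\{(z_2\cap h_1)\in\cF_{d-1}(z_2)\}$ selects exactly one representative from the $h_1^+$ side, so that each $(d-1)$-dimensional trace $z\cap h_1$ is counted precisely once; this is what avoids double counting across the two half-spaces of $h_1$.

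After this identification I would integrate against $\L(\dint h_1)$ and invoke the distributional equality \eqref{iterate} together with the independence of the copies $\vec Y_{s_2-s_1}$ of $Y_{s_1}$, so that the product measure $\PP_{Y_{s_2-s_1}}\otimes\PP_{Y_{s_1}}$ on the left collapses to $\PP_{Y_{s_2}}$ on the right. Concretely, the inner sum over $z_2$ with its facet indicator, integrated against $\PP_{Y_{s_2-s_1}}$, realizes the trace of the iterated tessellation on $z_1\cap h_1$, and summing over $z_1$ and integrating against $\PP_{Y_{s_1}}$ reconstructs the sum over all cells of $Y_{s_2}$. The bound $0<s_1<s_2$ guarantees $s_2-s_1>0$ so that \eqref{iterate} applies.

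The main obstacle I expect is the combinatorial matching in the second step: showing that the facet condition on the $h_1^+$ side produces exactly one term per trace cell $z\cap h_1$ of $Y_{s_2}$, neither over- nor under-counting. One must verify that every $(d-1)$-face lying in $h_1$ that bounds a cell of $Y_{s_2}$ corresponds to a unique cell $z_2$ of the finer tessellation on the $h_1^+$ side having $h_1$ as a facet, and conversely. This requires care because $h_1$ is not itself a dividing hyperplane of the iterated construction — it is an external hyperplane drawn against $\L$ — so the relevant faces are traces rather than maximal polytopes, and one has to argue that restricting attention to the $h_1^+$ half-space (rather than $h_1^-$) gives a consistent single-valued selection. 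Once this bijection is confirmed, the remaining manipulations are a routine application of \eqref{iterate}, Fubini, and the independence of the iterating copies.
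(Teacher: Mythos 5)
Your proposal is correct and follows essentially the same route as the paper's own proof: the same key geometric identification (a cell $z_2$ of $y_{s_2-s_1}\wedge(z_1\cap h_1^+)$ has $z_2\cap h_1\in\cF_{d-1}(z_2)$ exactly when $z_2=z\cap h_1^+$ for a cell $z$ of $y_{s_2-s_1}\wedge z_1$ hit by $h_1$, with $z_1\cap h_1\cap z_2=z\cap h_1$, which is precisely the single-representative/no-double-counting statement you describe), followed by Fubini and the iteration property \eqref{iterate}. The paper's proof is exactly this identification plus the application of \eqref{iterate}, so no comparison beyond this is needed.
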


\begin{proof}
Assume that $h_1 \cap \stackrel{\circ}{z_1}\not= \emptyset$ and $z_2\in  y_{s_2-s_1} \wedge (z_1\cap h_1^+)$. Then $z_2\cap h_1 \in \cF_{d-1}(z_2)$ if and only if $z_2 \subset z_1$ and there is a cell $z\in y_{s_2-s_1} \wedge z_1$ such that  $z_2=z\cap h_1^+$ and  $z_2\cap h_1 =z\cap h_1\not= \emptyset$. Hence, using Fubini's theorem,
\begin{eqnarray*}
&& \int \sum_{z_1\in y_{s_1}} \int \int \sum_{z_2\in  y_{s_2-s_1} \wedge (z_1\cap h_1^+)} 
\tilde g (z_1\cap h_1 \cap z_2) {\bf 1} \{ (z_2\cap h_1) \in \cF_{d-1}(z_2) \}  \\
&& \\
&& \qquad\qquad\qquad\qquad\qquad\qquad\cdot {\bf 1}\{h_1\in [z_1]\}\,\PP_{Y_{s_2-s_1}}(\dint y_{s_2-s_1}) \Lambda (\dint h_1)
\PP_{Y_{s_1}}(\dint y_{s_1}) \\
&& \\
&=& \int \int \int \sum_{z_1\in y_{s_1}}  \sum_{z\in  y_{s_2-s_1} \wedge z_1} 
\tilde g (z\cap h_1 )  {\bf 1}\{h_1\in [z]\}  
 \Lambda (\dint h_1) \PP_{Y_{s_2-s_1}}(\dint y_{s_2-s_1}) 
\PP_{Y_{s_1}}(\dint y_{s_1}) \\
&& \\
&=& \int \sum_{z\in y_{s_2}} \int 
\tilde g (  z \cap h_1 )  {\bf 1}\{h_1\in [z]\} 
 \Lambda (\dint h_1)
\PP_{Y_{s_2}}(\dint y_{s_2}), 
\end{eqnarray*}
where the last equality follows from (\ref{iterate}).
\end{proof}

Now we prove Proposition \ref{lem:kdaxpolintersectSTITlang}.
If $\left( \overline{\bf p},{\bf s}\right) =\left( \bigcap_{i=1}^{d-k} p_i ,{\bf s}\right)$ is a marked maximal $k$-polytope generated by a $(d-k)$-tuple $((p_1,s_1),\ldots (p_{d-k},s_{d-k}))=({\bf p}, {\bf s},k)\in m_t^{d-k}$,
then we can represent it in the following way which will be used in the formulas below. The $(d-1)$-polytope $p_1$ is located on a  hyperplane $h_1$ with birth time $s_1$, and at that time it divides a cell $z_1$, i.e., $p_1=z_1 \cap h_1$. For a STIT tessellation process, on both cells (indicated by + and --) adjacent to $p_1$ appear independent traces until time $s_2$  and these two traces will be treated  separately. Let us consider the case that the remaining maximal polytopes $((p_2,s_2),\ldots (p_{d-k},s_{d-k}))$ are located in the cell $z_1 \cap p_1^+$.
This cell is subdivided in the time interval $(s_1,s_2)$ by $\{ (p,s)\in m_t \wedge 
(z_1 \cap p_1^+): s_1<s<s_2 \}$. Then, at time $s_2$,  one of the cells, $z_2\subseteq z_1 \cap p_1^+$  is divided by $(p_2,s_2)$, and $\dim(p_1 \cap p_2)=d-2$. In particular, this means that one of the $(d-1)$-dimensional faces of $z_2$ is a subset of $p_1$, and this face is divided by $p_2$.
 The maximal $(d-1)$-polytope $p_2$ is located on a hyperplane $h_2$ with birth time $s_2$.
 
 This can now be continued inductively.  The combination of the possible choices in each step of the adjacent cells, indicated by + and --,  leads to a factor $2^{d-k-1}$. The $(d-k)$-tuple $({\bf p}, {\bf s},k)\in m_t^{d-k}$ will be processed step by step, and after separating $({p_1, s_1})$ the remaining $(d-k-1)$-tuple is denoted $({\bf p^{1}, s^{1}})$,  and so on. Further, assume $p_i\subset h_i \in \cH$, i.e., the hyperplane $h_i$ supports $p_i$.

For $0<s<t$ and $m$ a realization of $M$ denote
$m_{(+s,t)}:=\{ (p, s'+s):\ (p,s')\in M,\, 0<s'<t-s \}$, that is, the set of all birth time marked maximal $(d-1)$-polytopes, with a birth time shifted by $s$, and such that the shifted birth time is between $s$ and $t$. Furthermore, we denote
$\overline{\bf p}^j := \bigcap_{i=j+1}^{d-k} p_i$, for $j=1,\ldots d-k-2$. Now,
\begin{eqnarray*}
A&:=& \int \displaystyle {\sum_{({\bf p}, {\bf s},k)\in m_t^{d-k} }}
g\left( \overline{\bf p}, {\bf s},  m_t  \sqcap \ \overline{\bf p} \right) 
\PP_{{M_t}} (\dint  m_t) 
\\ &&\\
&=& \int \displaystyle {\sum_{({p_1, s_1})\in m_t}} \ \ {\sum_{({\bf p^{1}, s^{1}})}}
g\left( \overline{\bf p}, {\bf s} ,  m_t  \sqcap \ \overline{\bf p} \right) 
\PP_{{M_t}} (\dint  m_t).
\end{eqnarray*}
Next, we apply Theorem \ref{theor:slivnyakgeneratorSTIT}, exchange the order of integration and  partition the sum into two parts. This yields
\begin{eqnarray*} 
A&=& \int\int\int\int \int \sum_{z_1\in  y_{s_1}}   
\\  &&   \\
&& \Bigg[ 
{\sum_{({\bf p^{1}, s^{1}})\in ({m_{(+s_1,t)}^{+,(d-k-1)}}\wedge (z_1\cap h_1^+))}}   {\bf 1}\{ \dim(  z_1\cap h_1 \cap \overline{\bf p}^1 )=k \} 
\\  &&   \\
&& \cdot g\left(   z_1\cap h_1 \cap \overline{\bf p}^1, {\bf s}, 
         [(m_{(+s_1,t)}^-  \cup m_{(+s_1,t)}^+ ) \setminus \{  p_2, \ldots , p_{d-k-1} \} ] \cap z_1\cap h_1  \cap  \overline{\bf p}^1 \right)
 \\ &&   \\
&&  + 
{\sum_{({\bf p^{1}, s^{1}})\in ({m_{(+s_1,t)}^{-,(d-k-1)}}\wedge (z_1\cap h_1^-))}}
{\bf 1}\{ \dim(  z_1\cap h_1 \cap \overline{\bf p}^1 )=k \}
\\  &&   \\
&& 
 \cdot g\left(  z_1\cap h_1 \cap \overline{\bf p}^1,  {\bf s} , 
         [(m_{(+s_1,t)}^-  \cup m_{(+s_1,t)}^+ ) \setminus \{  p_2, \ldots , p_{d-k-1} \} ] \cap z_1\cap h_1  \cap \ \overline{\bf p}^1 \right) \Bigg]
         \\ &&  \\
&& 
         \PP_{{M}} (\dint  m^-)  \PP_{{M}} (\dint  m^+) 
 {\bf 1}\{h_1\in [z_1]\}   \Lambda (\dint h_1)  {\bf 1}\{ 0<s_1< t \}
P_{ Y_{s_1}} (\dint  y_{s_1})  \dint s_1\,.
\end{eqnarray*}
In the first item in squared brackets, i.e., the case 
$({\bf p^{1}, s^{1}})\in ({m_{(+s_1,t)}^{+,(d-k-1)}}\wedge (z_1\cap h_1^+))$, 
decompose $({\bf p^{1}, s^{1}})$ into $(p_2,s_2)$ and the remaining $(d-k-2)$-tuple $({\bf p^{2}, s^{2}})$. Applying Theorem \ref{theor:slivnyakgeneratorSTIT} once again, but this time to
$$
\int \displaystyle{\sum_{(p_2, s_2)\in (m_{(+s_1,t)}^+ \wedge (z_1\cap h_1^+))}} \{\ldots\}\, \PP_{{M}} (\dint  m^+)
$$
and noting that $z_2\subset z_1$, yields
\begin{align*}
& \int \int {\sum_{({\bf p^{1}, s^{1}})\in ({m_{(+s_1,t)}^{+,(d-k-1)}}\wedge (z_1\cap h_1^+))}}   {\bf 1}\{ \dim( \overline{\bf p}^1 \cap z_1\cap h_1 )=k \} \\ 
& \cdot g\left(   z_1\cap h_1 \cap \overline{\bf p}^1, {\bf s}, 
         [(m_{(+s_1,t)}^-  \cup m_{(+s_1,t)}^+ ) \setminus \{ z_1\cap h_1, p_2, \ldots , p_{d-k-1} \} ] \cap z_1\cap h_1  \cap  \overline{\bf p}^1 \right)\\
&  \qquad\qquad\qquad  \PP_{{M}} (\dint  m^-)  \PP_{{M}} (\dint  m^+) \\
&= \int \ldots \int \sum_{z_2\in  y_{s_2-s_1} \wedge (z_1\cap h_1^+)}   \\ 
& \Bigg[ 
{\sum_{({\bf p^{2}, s^{2}})\in  (m_{(+s_2,t)}^{++,(d-k-2)}\wedge (z_2\cap h_2^+))}}   {\bf 1}\{ \dim( \overline{\bf p}^2 \cap z_2\cap h_1 \cap h_2 )=k \} \\  
& \cdot g\Big(   z_2\cap h_1 \cap h_2 \cap \overline{\bf p}^2, {\bf s}, 
         [(m_{(+s_1,t)}^-  \cup m_{(+s_2,t)}^{++} \cup m_{(+s_2,t)}^{+-} ) \setminus \{  p_3, \ldots , p_{d-k-1} \} ]\cap\ldots\\
 &\hspace{5cm}       \ldots \cap z_2\cap h_1 \cap h_2 \cap  \overline{\bf p}^2 \Big)\\
&  + 
{\sum_{({\bf p^{2}, s^{2}})\in  (m_{(+s_2,t)}^{+-,(d-k-2)}\wedge (z_2\cap h_2^-))}}
{\bf 1}\{ \dim( \overline{\bf p}^2 \cap z_2\cap h_1 \cap h_2 )=k \}\\
& \cdot g\Big(  z_2\cap h_1 \cap h_2 \cap \overline{\bf p}^2,  {\bf s} , 
         [(m_{(+s_1,t)}^-  \cup m_{(+s_2,t)}^{++} \cup m_{(+s_2,t)}^{+-} ) \setminus \{  p_3, \ldots , p_{d-k-1} \} ]\cap\ldots\\
&\hspace{5cm}         \ldots \cap z_2\cap h_1  \cap h_2 \cap  \overline{\bf p}^2 \Big) \Bigg]\\ 
&\qquad\qquad\qquad\PP_{{M}} (\dint  m^-) \PP_{{M}} (\dint  m^{++})  \PP_{{M}} (\dint  m^{+-})  \PP_{Y_{s_2-s_1}}(\dint y_{s_2-s_1})        \\ 
&\qquad\qquad\qquad\cdot {\bf 1}\{h_2\in [z_2]\}   \Lambda (\dint h_2)  {\bf 1}\{ 0< s_1< s_2<t \} \dint s_2 \,.
\end{align*}

Now, we apply this argument repeatedly to all summands and decompose $({\bf p, s})$ step by step. Note that $z_{d-k}\subset \ldots \subset z_1  $, and also that the intersections like $[(m_{(+s_1,t)}^-  \cup m_{(+s_2,t)}^{++} \cup m_{(+s_2,t)}^{+-} ) \setminus \{  p_3, \ldots , p_{d-k-1} \} ] \cap z_2\cap h_1  \cap h_2 \cap  \overline{\bf p}^2$ do not depend on the combinations of signs (which determine a part of the space) in the upper index. 
Hence, Lemma \ref{lem:cellinclusion} yields
\begin{eqnarray*}
A &=& \sum_{(a_1,\ldots , a_{d-k-1})\in \{ +,-\}^{d-k-1 }} \int \ldots \int
\sum_{z_{d-k}\in  y_{s_{d-k}}}
\\ &&\\
&& 
\cdot g\left(   z_{d-k}\cap \overline{\bf h}, {\bf s},
\left( \bigcup_{i=1}^{d-k-1} m_{(+s_i,t)}^{(i)}  \cup m_{(+s_{d-k},t)}^{+} \cup m_{(+s_{d-k},t)}^{-} \right) \cap z_{d-k} \cap \overline{\bf h}  
\right)
\\ &&\\
&& \cdot {\bf 1}\left\{  z_{d-k}\cap \overline{\bf h}\not= \emptyset \right\} \, \Lambda (\dint h_{d-k})
\ldots \Lambda (\dint h_2) \Lambda (\dint h_1)
\\ &&\\
&&
\PP_{M} (\dint  {m}^{+}) \PP_{M} (\dint  {m}^{-}) 
\PP_{M} (\dint  {m}^{(d-k-1)})\ldots \PP_{ M} (\dint  { m}^{(1)}) \PP_{Y_{s_{d-k}}} (\dint {y_{s_{d-k}}})    
\\ &&\\
&& 
 \cdot{\bf 1} \{ 0< s_1<\ldots < s_{d-k}<t\}\,  \dint s_1 \ldots \dint s_{d-k} \, .
 \end{eqnarray*}
Now substitute  $ m_{(+s_i,t)}^{(i)}$ by the corresponding STIT tessellations $y_{t-s_i}^{(i)}=y( m_{(+s_i,t)}^{(i)})$. Furthermore, due to the spatial consistency of STIT tessellations the values of the summands do not depend on $(a_1,\ldots , a_{d-k-1})\in \{ +,-\}^{d-k-1 }$. Noting finally, that the first sum is running over $2^{d-k-1}$ terms leads to the identity
\begin{align*}
& \int \displaystyle {\sum_{({\bf p}, {\bf s},k)\in m_t^{d-k}} }
g\left( \overline{\bf p}, {\bf s},  m_t  \sqcap \ \overline{\bf p} \right) 
\PP_{{M_t}} (\dint  m_t) \\
&=   2^{d-k-1} 
\int \ldots \int \sum_{z\in  y_{s_{d-k}}} \textstyle{
g\Big(  z\cap \overline{\bf h},{\bf s} , 
 z\cap \overline{\bf h} \cap \left[ \bigcup_{i=1}^{d-k-1} \partial y_{t -s_{i}}^{(i)} \cup \partial y_{t -s_{d-k}}^{+} 
 \cup \partial y_{t -s_{d-k}}^{-} \right] \Big) \  }\\ 
&   \qquad\qquad\qquad\PP_{\underline Y}^{\otimes (d-k+1)} (\dint ({\underline y^{(1)}}, \ldots 
{\underline y^{(d-k-1)}},{\underline y^{+}},{\underline y^{-}})){\bf 1}\left\{  z\cap \overline{\bf h}\not= \emptyset \right\} \, \Lambda^{\otimes (d-k)} (\dint {\bf h})\\ 
&  \qquad\qquad\qquad  \PP_{Y_{s_{d-k}}} (\dint {y_{s_{d-k}}})     \cdot 
{\bf 1} \{ 0< s_1<\ldots < s_{d-k}<t\}\,  \dint s_1 \ldots \dint s_{d-k}   \,,
  \end{align*}
which completes the proof.\hfill $\Box$  
  
\subsection{Proof of Proposition \ref{lem:rho1}}

Using the scaling property \eqref{eq:STITglobalScaling}, changing the order of integration and substituting  $y_{s_{d-k}}$ by 
$\frac{1}{{s_{d-k}}} y_1$, we obtain from \eqref{eq:rhoformel} that
\begin{align*}
 \r_{k,1}^{(j)}  
&=  2^{d-k-1} \frac{1}{\ell_d(B)} \int \int \int \sum_{z\in  y_{s_{d-k}}} \textstyle{
{\bf 1}\{ c(z\cap \overline{\bf h})\in B \} V_j(z\cap \overline{\bf h})  }  \\
& \qquad \PP_{ Y_{s_{d-k}}} (\dint { y_{s_{d-k}}})  \Lambda^{\otimes (d-k)} (\dint {\bf h})  
{\bf 1} \{ 0<  s_{d-k}< 1\}\,  \frac{s_{d-k}^{d-k-1}}{(d-k-1)!} \dint s_{d-k}\\
&=  2^{d-k-1} \frac{1}{\ell_d(B)} \int \int \int \sum_{z\in  \frac{1}{{s_{d-k}}} y_1} \textstyle{
{\bf 1}\{ c(z\cap \overline{\bf h})\in B \} V_j(z\cap \overline{\bf h})  }  \\
& \qquad  \PP_{ Y_{1}} (\dint  y_{1}) \Lambda^{\otimes (d-k)} (\dint {\bf h})  
{\bf 1} \{ 0<  s_{d-k}< 1\}\,  \frac{s_{d-k}^{d-k-1}}{(d-k-1)!} \dint s_{d-k}\\
&=  2^{d-k-1} \frac{1}{\ell_d(B)} \int \int \int \sum_{z\in   y_1} \textstyle{
{\bf 1}\{ c(\frac{1}{{s_{d-k}}}z\cap \overline{\bf h})\in B \} V_j(\frac{1}{{s_{d-k}}}z\cap \overline{\bf h})  }  \\
& \qquad  \PP_{ Y_{1}} (\dint  y_{1}) \, \Lambda^{\otimes (d-k)} (\dint {\bf h})
{\bf 1} \{ 0<  s_{d-k}< 1\}\,  \frac{s_{d-k}^{d-k-1}}{(d-k-1)!} \dint s_{d-k}\,.
\end{align*}
We consider the two inner integrals separately.
Let $\gamma_1$ denote the mean number of cell centroids per unit volume and let ${\mathbb Q}_1$ denote the distribution of the typical cell of $Y_1$. Then an application of Campbell's theorem, multiplication with $s_{d-k}$, and the homogeneity of of the $j$th intrinsic volume $V_j$ yield 
\begin{align*}
I &:=\frac{1}{\ell_d(B)} \int \int \sum_{z\in   y_1} \textstyle{
{\bf 1}\{ c(\frac{1}{{s_{d-k}}}z\cap \overline{\bf h})\in B \} V_j(\frac{1}{{s_{d-k}}}z\cap \overline{\bf h})  }  \,
 \PP_{ Y_{1}} (\dint  y_{1})  \Lambda^{\otimes (d-k)} (\dint {\bf h})\\
&= \frac{1}{\ell_d(B)} \gamma_1 \int  \int \int 
{\bf 1}\{ c((z+x)\cap s_{d-k}\overline{\bf h})\in s_{d-k} B \} \, s_{d-k}^{-j} \\
&\qquad\qquad\qquad\qquad \cdot V_j((z+x)\cap s_{d-k}\overline{\bf h})   \, \ell_d (\dint x)  {\mathbb Q}_1 (\dint z)\Lambda^{\otimes (d-k)} (\dint {\bf h})\,.
\end{align*}
In a next step, we use that $s_{d-k}\overline{\bf h}=\overline{\bf h}+ (s_{d-k}-1)x^\perp = \overline{\bf h}_0 + s_{d-k}x^\perp$, which is a translation of $\overline{\bf h}$, where $x^\perp = \overline{\bf h}\cap \overline{\bf h}_0^\perp$, $\overline{\bf h}_0$ the $k$-dimensional linear subspace parallel to $\overline{\bf h}$ and $\overline{\bf h}_0^\perp$ its orthogonal complement.   The image of the measure 
${\bf 1}\{ \dim (\overline{\bf h})=k \}\cdot \Lambda^{\otimes (d-k)} (\dint {\bf h})$ the product measure, endowed with the indicator density) under the mapping ${\bf h} \mapsto \overline{\bf h}$ is invariant under translations. Then, according to \cite[Theorem 4.4.1]{schn/weil}, we obtain that 
\begin{align*}
I=&\frac{1}{\ell_d(B)} \gamma_1 \int  \int \int \textstyle{ 
{\bf 1}\{ c((z+x)\cap \overline{\bf h})\in s_{d-k} B \} \, s_{d-k}^{-j} \, V_j((z+x)\cap \overline{\bf h})  } \\
&\qquad\qquad\qquad\qquad\qquad \ell_d (\dint x)  {\mathbb Q}_1 (\dint z)s_{d-k}^{-(d-k)} \Lambda^{\otimes (d-k)} (\dint {\bf h}) \\ 
&= s_{d-k}^{d}\frac{1}{\ell_d(s_{d-k} B)} \gamma_1 \int  \int \int \textstyle{ 
{\bf 1}\{ c((z+x)\cap \overline{\bf h})\in s_{d-k} B \} \, s_{d-k}^{-j} \, V_j((z+x)\cap \overline{\bf h})  } \\
&\qquad\qquad\qquad\qquad\qquad \ell_d (\dint x)  {\mathbb Q}_1 (\dint z)s_{d-k}^{-(d-k)} \Lambda^{\otimes (d-k)} (\dint {\bf h}) \\ 
&= s_{d-k}^{k-j}\frac{1}{\ell_d(s_{d-k} B)} \gamma_1 \int  \int \int \textstyle{ 
{\bf 1}\{ c((z+x)\cap \overline{\bf h})\in s_{d-k} B \}   \, V_j((z+x)\cap \overline{\bf h})  } \\
&\qquad\qquad\qquad\qquad\qquad \ell_d (\dint x)  {\mathbb Q}_1 (\dint z) \Lambda^{\otimes (d-k)} (\dint {\bf h}) \\ 
&=s_{d-k}^{k-j}\frac{1}{\ell_d(B)} \int \int \sum_{z\in   y_1} \textstyle{
{\bf 1}\{ c(z\cap \overline{\bf h})\in B \} V_j(z\cap \overline{\bf h})  }  
 \PP_{ Y_{1}} (\dint  y_{1})  \Lambda^{\otimes (d-k)} (\dint {\bf h})\,,
\end{align*}
where the last equation follows from Campbell's theorem and by replacing $s_{d-k} B$ by $B$.

Plugging this expression for $I$ into the equation for $\r_{k,1}^{(j)}$ above, yields
\begin{align*}
 \r_{k,1}^{(j)}&=  2^{d-k-1} \frac{1}{\ell_d(B)} \int \int \int \sum_{z\in   y_1} \textstyle{
s_{d-k}^{k-j}{\bf 1}\{ c(z\cap \overline{\bf h})\in B \}  V_j(z\cap \overline{\bf h})  }  \\
& \qquad  \PP_{ Y_{1}} (\dint  y_{1})  \Lambda^{\otimes (d-k)} (\dint {\bf h})
{\bf 1} \{ 0<  s_{d-k}< 1\}\,  \frac{s_{d-k}^{d-k-1}}{(d-k-1)!} \dint s_{d-k}\\
&=
2^{d-k-1}  \frac{1}{(d-k-1)!\, (d-j)} \frac{1}{\ell_d(B)}\\
&\qquad\cdot  \int  \int \sum_{z\in  y_1} \textstyle{
{\bf 1}\{ c(z\cap \overline{\bf h})\in B \} V_j(z\cap \overline{\bf h})  }  
  \PP_{ Y_{1}} (\dint { y_1})  \Lambda^{\otimes (d-k)} (\dint {\bf h})\,,
\end{align*}
where the last equation results by integration with respect to $s_{d-k}$.  \hfill $\Box$

 \subsection{Proof of Theorem \ref{thm:GeneralBirthTime}}
 
For any  non-negative measurable function $g: (0,t)^{d-k}\to \RR$, Corollary \ref{lem:kdaxpolintersectSTIT} and an application of (\ref{eq:maxpolbirth}) yield
 \begin{align*}
 &\int g( {\bf s}) \, \QQ^{(j)}_{{\boldsymbol{ \beta}},t} (\dint {\bf s}) \\ 
&= \int g( {\bf s}) \, \QQ^{(j)}_{(\overline{\bf P}, {\boldsymbol{ \beta}},\tau ),t} (\dint (q, {\bf s}, T)) \\ 
&= 2^{d-k-1} \left[ \r_{k,t}^{(j)}\right]^{-1} \int \ldots \int \sum_{z\in  y_{s_{d-k}}} \textstyle{ {\bf 1} \{ c(z\cap \overline{\bf h})\in [0,1]^d \} \cdot
 V_j (z\cap \overline{\bf h} )\cdot  g(  {\bf s})\  } \Lambda^{\otimes (d-k)} (\dint {\bf h}) \\ 
& \qquad\cdot {\bf 1} \{ 0< s_1<\ldots < s_{d-k} <t\}\,  \dint s_1 \ldots \dint s_{d-k-1} \,   \PP_{ Y_{s_{d-k}}} (\dint { y_{s_{d-k}}}) \dint s_{d-k}\,.
\end{align*}   
Using (\ref{eq:STITglobalScaling}) and substituting  $y_{s_{d-k}}$ by 
$\frac{1}{{s_{d-k}}} y_1$ we obtain (similarly to the calculations in the proof of Proposition \ref{lem:rho1})
 \begin{align*}
 & \int g( {\bf s}) \,
 \QQ^{(j)}_{{\boldsymbol{ \beta}},t} (\dint {\bf s}) \\ 
&=  2^{d-k-1} \left[ \r_{k,t}^{(j)}\right]^{-1} 
\int \ldots \int \sum_{z\in  \frac{1}{{s_{d-k}}} y_1} \textstyle{ {\bf 1} \{ c(z\cap \overline{\bf h})\in [0,1]^d \}\cdot
V_j (z\cap \overline{\bf h} )\cdot g(  {\bf s})\  }\\ 
&\qquad \Lambda^{\otimes (d-k)} (\dint {\bf h}) 
 \PP_{ Y_1} (\dint { y_1}) {\bf 1} \{ 0< s_1<\ldots < s_{d-k} <t\}\,  \dint s_1 \ldots \dint s_{d-k-1}\dint s_{d-k}  \\ 
&=  2^{d-k-1}\left[ \r_{k,t}^{(j)}\right]^{-1} 
\int  \int \sum_{z\in   y_1} \textstyle{  {\bf 1} \{c( z\cap \overline{\bf h})\in [0,1]^d\}  \cdot
 V_j ( z\cap \overline{\bf h} )} \, \Lambda^{\otimes (d-k)} (\dint {\bf h}) 
 \PP_{ Y_1} (\dint { y_1}) \\ 
&\qquad\cdot \int  \ldots \int g(  {\bf s})\cdot s_{d-k}^{k-j} \cdot {\bf 1} \{ 0< s_1<\ldots < s_{d-k} <t\}\,  \dint s_1 \ldots \dint s_{d-k-1}\dint s_{d-k}\,.
\end{align*}  
We can now use Proposition \ref{lem:rho1} together with the scaling relation \eqref{eq:ScalingRho} to evaluate $\r_{k,t}^{(j)}$. This  yields the desired result immediately. \hfill $\Box$

\subsection{Proof of Corollary \ref{lem:maxpolbirth}}

We have
\begin{align*}\label{eq:maxpolbirthmargin}
&\int g( q, s_{d-k}) \,
 \QQ^{(j)}_{(\overline{\bf P}, { \beta_{d-k}} ),t} (\dint (q,  s_{d-k})) \\
&= 
 2^{d-k-1} \left[ \r_{k,t}^{(j)}\right]^{-1}\\ 
&\qquad\cdot \int \int \int \sum_{z\in  y_{s_{d-k}}} \textstyle{
V_j(z\cap \overline{\bf h})\cdot{\bf 1}\{c(z\cap \overline{\bf h})\in {[0,1]^d}\} \cdot  g((z\cap \overline{\bf h})-c(z\cap \overline{\bf h}),s_{d-k})\  } 
\\ 
&\qquad\qquad  \Lambda^{\otimes (d-k)} (\dint {\bf h}) \PP_{Y_{s_{d-k}}} (\dint { y_{s_{d-k}}}) 
{{s_{d-k}^{d-k-1}}\over {(d-k-1)!}  } {\bf 1} \{ 0<s_{d-k}<t\}\,   \dint s_{d-k} \,   
\\ 
&= 
 2^{d-k-1} \left[ \r_{k,t}^{(j)}\right]^{-1}\\
& \qquad\cdot \int \int \int \sum_{z\in  y_{s_{d-k}}} \textstyle{
V_j(z\cap \overline{\bf h})\cdot {\bf 1}\{c(z\cap \overline{\bf h})\in {[0,1]^d}\})\cdot  g(  (z\cap \overline{\bf h})-c(z\cap \overline{\bf h}),s_{d-k})\  } 
\\ 
&\qquad\qquad  \Lambda^{\otimes (d-k)} (\dint {\bf h}) \PP_{ Y_{s_{d-k}}} (\dint { y_{s_{d-k}}})  
{{s_{d-k}^{-(k-j)}}\over {(d-k-1)! (d-j)}  }     \,   
t^{d-j}\, \QQ^{(j)}_{\beta_{d-k},t}( \dint s_{d-k})
 \end{align*}
 and the result follows. \hfill $\Box$

\subsection{Proof of Theorem \ref{markovbirthCL}}

It is sufficient to consider functions  $g$ of the form
$$
g( q, {\bf s})=g_1( q)\cdot g_2(s_1,\ldots ,s_{d-k-1}) \cdot g_3(s_{d-k})\,,
$$
where $g_1: \cP_k \to \RR$, $g_2 : (0,t)^{d-k-1}\to \RR$, $g_3: (0,t)\to \RR$ are non-negative measurable functions. The proposition for general $g$ follows then by a standard measure-theoretic procedure. As in the proof of Theorem \ref{thm:GeneralBirthTime}, we have
\begin{align*} 
& \int  g( q, {\bf s}) \QQ^{(j)}_{(\overline{\bf P}, {\beta_1,\ldots , \beta_{d-k}} ),t} ( \dint (q, {\bf s})) \\
&=
 \left[ \r_{k,t}^{(j)}\right]^{-1}
 \int 
 \sum_{({\bf p}, {\bf s},k)\in m_t^{d-k}}
 {\bf 1}\{ c(\overline{\bf p})\in [0,1]^d\}\,  V_j(\overline{\bf p})\, g\left( \overline{\bf p}-c(\overline{\bf p}), {\bf s} \right)  \PP_{ M_t} (\dint m_t) 
  \\ 
&=  2^{d-k-1}\left[ \r_{k,t}^{(j)}\right]^{-1} \\ 
&\qquad\cdot\int \ldots \int \sum_{z\in   y_{s_{d-k}} } g_1\left(  (z\cap \overline{\bf h})-c(z\cap \overline{\bf h}))  \right)\, 
  {\bf 1}\{ c( z\cap \overline{\bf h})\in [0,1]^d\} \,
 V_j ( z\cap \overline{\bf h} )\,  
  \\ 
& \qquad\qquad \Lambda^{\otimes (d-k)} (\dint {\bf h}) 
 \PP_{ Y_{s_{d-k}}} (\dint y_{s_{d-k}}) g_2(s_1,\ldots ,s_{d-k-1}) \, {\bf 1} \{ 0< s_1<\ldots < s_{d-k} \}\,  \dint s_1 \ldots \dint s_{d-k-1}\\ 
& \qquad\qquad\qquad\cdot g_3(s_{d-k})\, {\bf 1} \{ 0< s_{d-k} <t\}\dint s_{d-k}\,.
\end{align*} 
Now, we apply Corollary \ref{cor:marginalconditional} and  Corollary \ref{lem:maxpolbirth} and obtain that this is equal to
\begin{align*} 
&
\int \int \int \frac{(d-k-1)! (d-j) s_{d-k}^{k-j}}{t^{d-j}}   g_1 (q) \,\QQ^{(j)}_{{\overline{\bf P} ,t}| \beta_{d-k}=s_{d-k} } (\dint q) 
   \\ 
& \qquad\cdot g_2(s_1,\ldots ,s_{d-k-1}) \,   
{{1}\over{(d-k-1)!}}\,  s_{d-k}^{d-k-1} 
\QQ^{(j)}_{{( {\beta_1,\ldots , \beta_{d-k-1}} ),t} |\beta_{d-k}=s_{d-k} } \\ 
& \qquad\qquad \dint (s_1,\ldots , s_{d-k-1})\,g_3(s_{d-k})\, {t^{d-j}\over (d-j)} s_{d-k}^{-(d-j-1)}\QQ^{(j)}_{\beta_{d-k}} (\dint s_{d-k} )
 \\ 
&= \int \int \int  g(q, {\bf s} ) \QQ^{(j)}_{{\overline{\bf P} ,t}| \beta_{d-k}=s_{d-k} } (\dint q)
  \QQ^{(j)}_{{( {\beta_1,\ldots , \beta_{d-k-1}} ),t} |\beta_{d-k}=s_{d-k} } \\
&\qquad\qquad\qquad \dint (s_1,\ldots , s_{d-k-1})  \QQ^{(j)}_{\beta_{d-k}} (\dint s_{d-k} )\,,
\end{align*} 
which completes the proof.\hfill $\Box$
  
\subsection{Proof of Theorem \ref{thm:GeneralInternalVertices}}\label{sec:proofs2}

For fixed ${\bf h}=(h_1,\ \ldots , h_{d-1})\in \cH ^{d-1}$ with the hyperplanes in general position define the line $\overline{\bf h}=\bigcap_{i=1}^{d-1} h_{i}$.
Because the intersection of a STIT with a line is a Poisson point process (see \cite{nagel/weiss03}),   $\overline{\bf h} \cap \left[ \bigcup_{i=1}^{d-2} \partial y_{t -s_{i}}^{(i)} \cup \partial y_{t -s_{d-1}}^{+} 
 \cup \partial y_{t -s_{d-1}}^{-} \right] $ is a realization of a superposition of $d$ independent Poisson point processes on the line $\overline{\bf h}$, with a law invariant under translations on this line. Due to the stationarity of STIT tessellations, the intensity of this point process depends only on the direction of this line, which we denote by $u\in \cS^{d-1}$, and,  up to a factor $b(u)>0$, it is given by the sum
 $$
 a({\bf s})=\sum_{i=1}^{d-2}( t -s_{i}) + 2(t -s_{d-1})= d\cdot t - 2s_{d-1} -\sum_{i=1}^{d-2} s_{i}\,.
 $$
Thus for any cell $z$ the number of points of 
$$z\cap \overline{\bf h} \cap \left[ \bigcup_{i=1}^{d-2} \partial y_{t -s_{i}}^{(i)} \cup \partial y_{t -s_{d-1}}^{+} 
 \cup \partial y_{t -s_{d-1}}^{-} \right] $$
follows a Poisson distribution with parameter $ V_1 ( z\cap \overline{\bf h})\cdot b(u) \cdot a({\bf s})$. Now, we apply this fact together with \eqref{eq:maxpolbirth} and Proposition \ref{lem:kdaxpolintersectSTITlang} to conclude that for $j=0,1$
\begin{eqnarray}\label{eq:punktzahl}
&& \int {\bf 1} \{ \# T =n\} 
 \QQ^{(j)}_{(\overline{\bf P}, {\boldsymbol{ \beta}},\tau ),t} (\dint (\overline{\bf p}, {\bf s},T)) \nonumber \\ && \nonumber  \\
& = & \left[ \r_{1,t}^{(j)}\right]^{-1} \int \sum_{({\bf p}, {\bf s},1)\in m_t^{d-1}} 
V_j(\overline{\bf p})\cdot {\bf 1}\{ c(\overline{\bf p})\in [0,1]^d\}  {\bf 1} \{ \# (m_t  \sqcap \ \overline{\bf p})=n \} \PP_{ M_t} (\dint m_t) \nonumber \\ && \nonumber \\
&= & 
2^{d-2} \left[ \r_{1,t}^{(j)}\right]^{-1} \int \ldots \int \sum_{z\in  y_{s_{d-1}}} \textstyle{
\, 
V_j(z\cap \overline{\bf h})\, {\bf 1}\{ c( z\cap \overline{\bf h})\in [0,1]^d\} } \nonumber \\ && \nonumber \\
 &&  \qquad\cdot{\bf 1} \left\{ \# 
\left( z\cap \overline{\bf h} \cap \left[ \bigcup_{i=1}^{d-2} \partial y_{t -s_{i}}^{(i)} \cup \partial y_{t -s_{d-1}}^{+} 
 \cup \partial y_{t -s_{d-1}}^{-} \right] \right) =n \right\} 
\nonumber  \\ && \nonumber \\ 
&&  \qquad\qquad \PP_{\underline Y}^{\otimes d} (\dint ({\underline y^{(1)}}, \ldots 
{\underline y^{(d-2)}},{\underline y^{+}},{\underline y^{-}})) \, \PP_{ Y_{s_{d-1}}} (\dint y_{s_{d-1}})  \\
&&  \qquad\cdot {\bf 1} \{ 0< s_1<\ldots < s_{d-1}<t\}\,  \dint s_1 \ldots \dint s_{d-1} \,   
\Lambda^{\otimes (d-1)} (\dint {\bf h}) \nonumber  \\ 
&& \nonumber \\
&= & 
2^{d-2} \left[ \r_{1,t}^{(j)}\right]^{-1} \int \ldots \int \sum_{z\in  y_{s_{d-1}}} \textstyle{
 \, 
V_j(z\cap \overline{\bf h})\, {\bf 1}\{ c( z\cap \overline{\bf h})\in [0,1]^d\}} \nonumber \\ && \nonumber \\
 &&  \qquad\cdot \frac{[V_1 ( z\cap \overline{\bf h}) b(u)  a({\bf s})]^n}{n!}
  e^{-V_1 ( z\cap \overline{\bf h}) b(u)  a({\bf s})}
 \PP_{{ Y}_{s_{d-1}}} (\dint { y}_{s_{d-1}}) \\ &&\nonumber  \\
 &&
 \qquad\cdot{\bf 1} \{ 0< s_1<\ldots < s_{d-1}<t\}\,  \dint s_1 \ldots \dint s_{d-1} \,      
\Lambda^{\otimes (d-1)} (\dint {\bf h}) \,. \nonumber 
 \end{eqnarray} 

For the stationary STIT tessellation $ Y_{s_{d-1}}$ we consider the induced one-dimensional tessellation $Y'_{s_{d-1}} = Y_{s_{d-1}}\cap \overline{\bf h}$  as a marked point process (centers of the segments, marked with the lengths of the segments). Its intensity, that is, the mean number of segment centres per unit length on $\overline{\bf h}$ is equal to $b(u)\, s_{d-1}$. 

Denote by $\QQ_l$ the  distribution of the length of the typical segment, which is the exponential distribution with parameter $b(u)\, s_{d-1}$. Then the stationarity of the STIT tessellation and the refined Campbell theorem for marked point processes \cite[Theorem 3.5.3]{schn/weil} imply for the inner integral that
\begin{eqnarray*} 
I&:=& \int \sum_{z\in  y_{s_{d-1}}} \textstyle{
 \, 
V_j(z\cap \overline{\bf h})\, {\bf 1}\{ c( z\cap \overline{\bf h})\in [0,1]^d\}} \\
&& \qquad\qquad\qquad\times \frac{[V_1 ( z\cap \overline{\bf h}) b(u)  a({\bf s})]^n}{n!}
  e^{-V_1 ( z\cap \overline{\bf h}) b(u)  a({\bf s})}\,
 \PP_{{ Y}_{s_{d-1}}} \dint { y}_{s_{d-1}} \\ && \\
 &=& \int \sum_{z'\in  y'_{s_{d-1}}} \textstyle{
 \, 
V_j(z')\, {\bf 1}\{ c(z')\in [0,1]^d \}}   \frac{[V_1 ( z') b(u)  a({\bf s})]^n}{n!}
  e^{-V_1 ( z') b(u)  a({\bf s})}\,
 \PP_{Y'_{s_{d-1}}} \dint y'_{s_{d-1}} \\ && \\
 &=& b(u)\, s_{d-1} V_1 ([0,1]^d \cap \overline{\bf h}) 
 \int x^j\,   \frac{[x b(u)  a({\bf s})]^n}{n!}
  e^{-x b(u)  a({\bf s})}\,
 \QQ_l (\dint x) \\ && \\
 &=& b(u)\, s_{d-1} V_1 ([0,1]^d \cap \overline{\bf h}) 
 \int_0^\infty x^j\,   \frac{[x b(u)  a({\bf s})]^n}{n!}
  e^{-x b(u)  a({\bf s})}
  b(u)\, s_{d-1} e^{-b(u)\, s_{d-1}x}\,\dint x\,.
\end{eqnarray*}   
Integration yields  
\begin{eqnarray*} 
I &=& \begin{cases}  
     V_1 ([0,1]^d \cap \overline{\bf h}) \, \displaystyle{\frac{(n+1)a({\bf s})^n s_{d-1}^2}{ (a({\bf s}) +s_{d-1})^{n+2}}}  & \mbox{ if } j=1\, ,  \\  \\
    \, b(u)\, V_1 ([0,1]^d \cap \overline{\bf h})\,  
    \displaystyle{\frac{a({\bf s})^n s_{d-1}^2}{(a({\bf s}) +s_{d-1})^{n+1}}} & \mbox{ if } j=0\,.
  \end{cases}
 \end{eqnarray*}

Now we compute the inner integral on the right-hand side of \eqref{eq:rhoformel} for the special choices $B=[0,1]^d$ and $k=1$ in the same way:
\begin{eqnarray*} 
I_\rho&:=& \int \sum_{z\in  y_{s_{d-1}}} \textstyle{
 \, 
V_j(z\cap \overline{\bf h})\, {\bf 1}\{ c( z\cap \overline{\bf h})\in [0,1]^d\}}   \,
 \PP_{{ Y}_{s_{d-1}}} (\dint { y}_{s_{d-1}}) \\ && \\
 &=& \int \sum_{z'\in  y'_{s_{d-1}}} \textstyle{
 \, 
V_j(z')\, {\bf 1}\{ c(z')\in [0,1]^d \}}  \,
 \PP_{Y'_{s_{d-1}}} (\dint y'_{s_{d-1}}) \\ && \\
 &=& b(u)\, s_{d-1} V_1 ([0,1]^d \cap \overline{\bf h}) 
 \int x^j\,   
 \QQ_l (\dint x) \\ && \\
 &=& b(u)\, s_{d-1} V_1 ([0,1]^d \cap \overline{\bf h}) 
 \int_0^\infty x^j\,   
  b(u)\, s_{d-1} e^{-b(u)\, s_{d-1}x}\,\dint x 
\end{eqnarray*} 
and thus we obtain
\begin{eqnarray*} 
I_\rho &=& \begin{cases}  
     V_1 ([0,1]^d \cap \overline{\bf h}) & \mbox{ if } j=1\, ,   \\ 
    b(u)  V_1 ([0,1]^d \cap \overline{\bf h})\, s_{d-1}  
     & \mbox{ if } j=0\,.
  \end{cases}
 \end{eqnarray*} 
Combining these results leads to
\begin{eqnarray*}
 \r_{1,t}^{(j)}
&=&  \begin{cases} \displaystyle{\frac{2^{d-2} \, t^{d-1}}{(d-1)!} } \int  V_1 ([0,1]^d \cap \overline{\bf h}) \,\Lambda^{\otimes (d-1)} (\dint {\bf h}) & \mbox{ if } j=1\, ,   \\  \\
\displaystyle{\frac{2^{d-2} \, t^{d}}{d(d-2)!} }  \int   b(u)\,   V_1 ([0,1]^d)\, \Lambda^{\otimes (d-1)} (\dint {\bf h}) & \mbox{ if } j=0\,. 
 \end{cases}
\end{eqnarray*}
Finally, plugging  the inner integral $I$ and  the expression for $\r_{1,t}^{(j)}$ into \eqref{eq:punktzahl}, yields the assertion of Theorem \ref{thm:GeneralInternalVertices}.   \hfill $\Box$

\bibliographystyle{plain} 
\bibliography{literatur_nagel2}

\end{document}